\newcommand\tbbint{{-\mkern -16mu\int}}
\newcommand\dbbint{{-\mkern -19mu\int}}
\newcommand\bbint{
	{\mathchoice{\dbbint}{\tbbint}{\tbbint}{\tbbint}}
}
\begin{document}


\markboth{Juan Zhang, Shifeng Li, Kai Jiang*}{Block preconditioners for mass-conserved Ohta-Kawasaki equation}
\title{Two efficient block preconditioners for the mass-conserved Ohta-Kawasaki equation}

\author[Juan Zhang, Shifeng Li, Kai Jiang*]{Juan Zhang, Shifeng Li, Kai Jiang\corrauth}
\address{School of Mathematics and Computational
	Science, Xiangtan University, Xiangtan, 411105, China,
	Hunan Key Laboratory for Computation and Simulation in Science
	and Engineering, China.}
\email{kaijiang@xtu.edu.cn (Kai Jiang); zhangjuan@xtu.edu.cn (Juan Zhang); shifengli@smail.xtu.edu.cn (Shifeng Li)}

%
%
%

\begin{abstract}
In this paper, we propose two efficient block preconditioners to solve the
mass-conserved Ohta-Kawasaki equation with finite element discretization.
We also study the spectral distribution of these two preconditioners,
\textit{i.e.,} Schur complement preconditioner and the modified Hermitian and
skew-Hermitian splitting (MHSS in short) preconditioner.
Besides, Newton method and Picard method are used to address the implicitly nonlinear term. We rigorously analyze the convergence of Newton method.
Finally, we offer numerical examples to support the theoretical analysis and indicate the efficiency of the proposed preconditioners for the mass-conserved Ohta-Kawasaki equation.
\end{abstract}

\keywords{Mass-conserved Ohta-Kawasaki equation,
	Newton method,
	Schur complement preconditioner,
	MHSS preconditioner}

\ams{65F08, 65N12, 65N20, 65M60}

\maketitle

\section{Introduction}
\label{sec:intrd}

Diblock copolymers are macromolecules composed of two
incompatible blocks linked together by covalent bonds.
The incompatibility between the two blocks drives the system to phase
separation, while the chemical bonding of the two blocks prevents
the macroscopic phase separation.
These competition factors lead diblock copolymers
to self-assemble into a rich class of complex nanoscale
structures~\cite{bates2012multiblock,jiang2013discovery}.
Modeling and numerical simulation are effective means to
investigate phase behaviors of block copolymers,
such as the self-consistent field theory, and
coarse-grained density functional theory~\cite{leibler1980theory,G.2016,G.2018,fredrickson2006equilibrium}.
Among these theories, Ohta and Kawasaki \cite{T.K.1986}
presented an effective free energy
functional to study diblock copolymers, which can be rescaled as
\begin{align}
	E(u)=\int_{\Omega}(\frac{\epsilon^2}{2} \vert\nabla u\vert^2+\frac{1}{4}(1-u^2)^2+\frac{\sigma}{2}
	\vert(-\Delta)^{-\frac{1}{2}}(u-m)\vert^2)\,dx.
	\label{eq:okenergy}
\end{align}
$u(x)$ is the order parameter that measures the
order of the diblock copolymer system.
$m=\frac{1}{\vert\Omega\vert}\int_{\Omega} u(x)\,dx:=\bbint_{\Omega} u(x)\,dx$ denotes the average mass of the melt on the domain $\Omega$.
The parameters $\epsilon\ll 1$ and $\sigma$ measure
the interfacial thickness in the region of pure phases and the
non-local interaction potential, respectively.
In the energy functional~\eqref{eq:okenergy}, the first term penalizes the jump
in the solution, the second term favors $u=\pm 1$, and the last term
penalizes variation from the mean by a long-range interaction.
More physical background about the Ohta-Kawasaki free energy
functional is described in \cite{T.K.1986}, and corresponding
mathematical theories can be found in the literature
\cite{R.X.2002} and references therein.

Using the Ohta-Kawasaki free energy functional, a mass-conserved dynamic equation can be given as
\begin{align}
	u_t = \Delta \mu.
	\label{eq:dynamic}
\end{align}
$\mu$ is the chemical potential, i.e., the variation derivative of $E$
with respect to $u$
\begin{align}
	\mu = \frac{\delta E}{\delta u} = -\epsilon^2\Delta u -
	u(1-u^2) - \sigma \Delta^{-1} (u-m).
	\label{}
\end{align}
By introducing a new variable $$w=-\epsilon^2\Delta u-u(1-u^2),$$
the fourth-order dynamic equation~\eqref{eq:dynamic} can be split
into two second-order equations on $\Omega \times [0,T]$
\begin{subequations}
	\label{eq:CH}
	\begin{align}
		u_t-\Delta w + \sigma(u-m)=0,\\
		w+\epsilon^2\Delta u-u(u^2-1)=0.
	\end{align}
\end{subequations}
It is easy to verify that the energy functional \eqref{eq:okenergy} is
nonincreasing in time along the solution trajectories of
\eqref{eq:CH} with homogeneous Neumann boundary condition
\begin{align}
	\nabla u \cdot n =0 ~~\mbox{and}~~ \nabla w \cdot n=0 ~~\mbox{on}~~ [0,T]
\end{align}
and the initial value $u(x,0)=u_0(x)$, $x\in \Omega$.

From the numerical computation viewpoint, it is necessary to
construct an efficient numerical method to solve the gradient flow \eqref{eq:CH}.
For time discretization direction, in recent literature, many energy stable approaches
have been proposed, such as convex-splitting schemes \cite{D.1998}, stabilized factor
methods \cite{xu2006stability}, auxiliary variable approaches
\cite{shen2018sav}, and exponential time differencing schemes
\cite{du2004ETD}.
Among these time discretization approaches, the convex splitting scheme (CSS in
short) splits the non-convex nonlinear term into two convex parts and
explicitly-implicitly treats them, which does not have time step restriction in theory. For spatial discretization direction, we adopt the widely used finite element method (FEM in short) in \eqref{eq:CH}.

Solving the discretization system faces twofold difficulties.
One is the implicitly nonlinear term that will take the most time-consuming part in
each time update.
The common approaches to updating the nonlinear term
include Picard method \cite{H.1979,W.2015}, Newton method \cite{J.2017,D.2008},
the nonlinear multigrid method \cite{W.2010,W.2009}, the preconditioned steepest descent algorithm \cite{X.Y.2019,W.2016}, etc.
Picard method and Newton method have been applied to the Ohta-Kawasaki equation \cite{J.2017,P.J.2017}.
However, the convergence rate of Picard method is usually unsatisfactory \cite{X.2019}.
We mainly choose Newton method to update the nonlinear term of the Ohta-Kawasaki equation \eqref{eq:CH}. More significantly, we present a rigorous theoretical analysis of Newton method.

The other difficulty is solving the ill-conditioned linear system at each time step, leading to high computational costs. Due to this, we begin to explore feasible calculation schemes and present corresponding theoretical analyses.
As we know, the previous methods, such as Krylov subspace methods for solving the linear system, may not be convergent or converge slowly without appropriate preconditioners. Recent researchers have paid attention to the construction of preconditioners for the Ohta-Kawasaki equation \cite{J.M.P.2014,L.2018}. Therefore, our other goal is to design efficient preconditioners using Schur complement approximation and MHSS, respectively.

The main contribution of our paper is summarized as follows

\begin{enumerate}
	\item We propose two block preconditioners to solve the ill-conditioned system.
	The Schur complement preconditioner is given by
	\begin{align*}
		\mathcal{P}_{Schur}= \left(
		\begin{array}{cc}
			M &  \zeta S  \\
			-\epsilon^2 S & M+2 \epsilon \sqrt{\zeta} S
		\end{array}
		\right),
	\end{align*}
	and MHSS preconditioner is given by
	\begin{align*}
		\mathcal{P}_{MHSS}=\left(
		\begin{array}{cc}
			A& -\frac{1}{\alpha}AM \\
			M & \zeta S
		\end{array}
		\right),
	\end{align*}
	where $\zeta,~\epsilon,~\alpha$ are given parameters,
	the matrices $M,~S,~A$ are dependent on the numerical discretization scheme of
	the Ohta-Kawasaki equation, see Section \ref{subsec:implementation} for details.
	\item We present an unconditionally stable energy method to \eqref{eq:okenergy} based on the convex-splitting scheme;
	\item We apply Newton method and Picard method to update implicitly nonlinear term, respectively. Meanwhile, the convergence analysis of Newton method is proved.
\end{enumerate}

The rest of the paper is organized as follows.
In Section \ref{sec:method}, Ohta-Kawasaki dynamic equation is discretized
via CSS and FEM. Section \ref{sec:CA} presents the convergence analysis for Newton method.
Section \ref{subsec:implementation} gives two block preconditioners
and analyzes corresponding spectral distributions.
Section \ref{sec:rslts} offers numerical examples to verify theoretical results and demonstrate the efficiency of our proposed method.
Some concluding remarks are drawn in Section \ref{sec:conclusion}.

Throughout the paper, the set of $n \times n$ complex and real
matrices are denoted by $\mathbb{C}^{n\times n}$ and
$\mathbb{R}^{n\times n}$. If $X,~Y\in
\mathbb{R}^{n\times n}$, let $X^{-1}, ~X^{T}, ~\Vert X\Vert_2$ represent
the inverse, conjugate transpose, and the spectral norm of $X$,
respectively.
The expression $X \succ 0$ ($X \succeq 0$) means that $X$
is symmetric (semi-) positive definite. $X \succ Y$ ($X \succeq Y$)
represents that $X-Y$ is symmetric (semi-) positive definite. 
We employ the multi-index notation $\hat\alpha=(\hat\alpha_1,\ldots,\hat\alpha_d)\in \mathbb{Z}^d$
and $\vert\hat\alpha\vert=\vert \hat\alpha_1\vert +\ldots+\vert \hat\alpha_d\vert $.
Let $W^{\hat{m},p}(\Omega)$ be a standard Sobolev space
equipped with the norm $\|\cdot\|_{\hat{m},p}$
and the semi-norm $\vert \cdot\vert_{\hat{m},p}$,
\begin{align*}
	\Vert v\Vert_{\hat{m},p}=
	\begin{cases}
		&\Big{[}\sum\limits_{\vert\hat{\alpha}\vert\leq \hat{m}}\Vert D^{\hat{\alpha}}v\Vert^p_{L^p(\Omega)}
		\Big{]}^{\frac{1}{p}},~~1\leq p<\infty,\\
		&\sum\limits_{\vert\hat{\alpha}\vert\leq
			\hat{m}}\sup\limits_{\Omega} \vert D^{\hat{\alpha}}v\vert,\quad~~p=\infty,
	\end{cases}\\
	\vert v\vert_{\hat{m},p}=
	\begin{cases}		
		&\Big{[}\sum\limits_{\vert\hat{\alpha}\vert
			=\hat{m}}\Vert D^{\hat{\alpha}}v\Vert^p_{L^p(\Omega)}\Big{]}^{\frac{1}{p}},
		~~1\leq p<\infty,\\		
		&\sum\limits_{\vert\hat{\alpha}\vert=
			\hat{m}}\sup\limits_{\Omega} \vert D^{\hat{\alpha}}v\vert,\quad ~~p=\infty.
	\end{cases}
\end{align*}
The notation $\|\cdotp\|$ stands for the $L^2(\Omega)$-norm.

\section{Numerical discretization}
\label{sec:method}

Before we go further, it is necessary to give the weak form
of \eqref{eq:CH}. Using $L^2(\Omega)$-inner product and test function $v\in
H^{1}(\Omega)$, we can have the variational formulation of
\eqref{eq:CH}. Denote the bulk energy density as
\begin{equation}
	\Phi(u)=\frac{1}{4}(1-u^2)^2.
\end{equation}
We seek to find $u(\cdot,t)\in H^{1}(\Omega)$ and $w(\cdot,t)\in
H^{1}(\Omega)$ such that
\begin{equation}
	\begin{split}
		(u_t,v)+(\nabla w, \nabla v) + \sigma(u-m,v)=0,\\
		(w,v)-\epsilon^2(\nabla u, \nabla v)-(\Phi'(u),v)=0,
	\end{split}\label{eq:weakCH}
\end{equation}
where
$$\Phi'(u)=-u(1-u^2).$$
In the following subsections, we will discretize \eqref{eq:weakCH} through CSS and FEM.

\subsection{CSS discretization}
\label{subsec:css}

The CSS, originally proposed by Eyre \cite{D.1998}, splits the non-convex bulk
energy density $\Phi(u)$ into two convex functions:
\begin{align*}
	\Phi(u)=\Phi_{+}(u)-\Phi_{-}(u),
\end{align*}
with
$$\Phi_{+}(u)=\dfrac{1}{4}(u^4+1), ~~\Phi_{-}(u)=\dfrac{1}{2}u^2.$$
Using the above notations, \eqref{eq:weakCH} becomes
\begin{equation}
	\begin{split}
		&(u_t,v)+(\nabla w, \nabla v) + \sigma(u-m,v)=0,\\
		&(w,v)-\epsilon^2(\nabla u,\nabla
		v)-(\Phi'_{+}(u)-\Phi'_{-}(u),v)=0,
	\end{split}\label{eq:css}
\end{equation}
with $$\Phi'_{+}(u)=u^3, ~~\Phi'_{-}(u)=u.$$
The CSS treats $\Phi'_+$ implicitly and $\Phi'_-$ explicitly. In particular, let $\Delta t=T/N$, $N\geq 1$,
be the uniform time step size.
$u^n$ and $w^n$ represent the approximation of $u(\cdot, t_n)$
and $w(\cdot, t_n )$ in $H^{1}(\Omega)$, where $t_n=n\Delta t$, $n=0, 1,\cdots,N$.
The continuous system \eqref{eq:weakCH} can be discretized as
\begin{equation}
	\begin{split}
		\Big(\frac{u^n-u^{n-1}}{\Delta t},v \Big)+(\nabla w^n, \nabla v) + \sigma(u^n - m ,v)=0,\\
		(w^n,v)-\epsilon^2(\nabla u^n, \nabla
		v)-(\Phi'_+(u^n)-\Phi'_-(u^{n-1}), v)=0,
	\end{split}\label{eq:semidiscretedCH}
\end{equation}
where the test function $v\in H^{1}(\Omega)$.
The CSS \eqref{eq:semidiscretedCH} has the energy dissipation law as shown in
Theorem \ref{thm:dissp}.
\begin{theorem}\label{thm:dissp}
	The scheme \eqref{eq:semidiscretedCH} is unconditionally energy stable. Moreover, we have
	\begin{align}
		E(u^{n})\leq E(u^{n-1})- \Delta t\|\nabla \mu\|^2 -
		\frac{\varepsilon^2}{2}\|\nabla (u^n-u^{n-1})\|^2.\label{eq:energydissp}
	\end{align}
\end{theorem}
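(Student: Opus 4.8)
The plan is to derive the energy inequality by testing the semi-discrete equations \cref{eq:semidiscretedCH} against suitable test functions and exploiting the convexity of $\Phi_+$ and $\Phi_-$. First I would choose $v = w^n$ in the first equation of \cref{eq:semidiscretedCH} and $v = (u^n-u^{n-1})/\Delta t$ in the second equation. Adding the two resulting identities cancels the mixed terms $(\nabla w^n,\nabla(u^n-u^{n-1})/\Delta t)$ up to sign bookkeeping, leaving an expression that couples $\|\nabla w^n\|^2$ (which should become the dissipation term after identifying $\nabla\mu$ with $\nabla w^n$ in the discrete setting, using the first equation to relate $w^n$ to the chemical potential), the term $\epsilon^2(\nabla u^n,\nabla(u^n-u^{n-1}))$, and the nonlinear term $(\Phi'_+(u^n)-\Phi'_-(u^{n-1}),u^n-u^{n-1})$.

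The second step is to handle each piece with the standard algebraic identity $a(a-b) = \tfrac12(a^2 - b^2) + \tfrac12(a-b)^2$. Applied to the $\epsilon^2$ term this yields
\begin{align*}
\epsilon^2(\nabla u^n,\nabla(u^n-u^{n-1})) = \frac{\epsilon^2}{2}\big(\|\nabla u^n\|^2 - \|\nabla u^{n-1}\|^2\big) + \frac{\epsilon^2}{2}\|\nabla(u^n-u^{n-1})\|^2,
\end{align*}
which produces both the gradient part of $E(u^n)-E(u^{n-1})$ and the extra $\tfrac{\epsilon^2}{2}\|\nabla(u^n-u^{n-1})\|^2$ term appearing on the right-hand side of \cref{eq:energydissp}. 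For the convex-splitting nonlinearity I would use the convexity of $\Phi_+$ in the form $\Phi_+(u^{n-1}) \ge \Phi_+(u^n) + \Phi_+'(u^n)(u^{n-1}-u^n)$, i.e. $\Phi_+'(u^n)(u^n-u^{n-1}) \ge \Phi_+(u^n)-\Phi_+(u^{n-1})$, and the concavity of $-\Phi_-$ (equivalently convexity of $\Phi_-$) in the form $-\Phi_-'(u^{n-1})(u^n-u^{n-1}) \ge -(\Phi_-(u^n)-\Phi_-(u^{n-1}))$. Together these give
\begin{align*}
(\Phi_+'(u^n)-\Phi_-'(u^{n-1}),\,u^n-u^{n-1}) \ge \int_\Omega \big(\Phi(u^n)-\Phi(u^{n-1})\big)\,dx,
\end{align*}
recovering the bulk-energy difference in $E(u^n)-E(u^{n-1})$.

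The third step is to treat the nonlocal $\sigma$-term: testing an appropriate combination so that $\sigma(u^n-m, \cdot)$ reconstructs $\tfrac{\sigma}{2}\|(-\Delta)^{-1/2}(u^n-m)\|^2 - \tfrac{\sigma}{2}\|(-\Delta)^{-1/2}(u^{n-1}-m)\|^2$ via the same $a(a-b)$ identity, after using the first equation of \cref{eq:semidiscretedCH} to express $(u^n-u^{n-1})/\Delta t$ and integrating against $(-\Delta)^{-1}(u^n-m)$. Assembling the gradient term, the bulk term and the nonlocal term gives exactly $E(u^n) \le E(u^{n-1}) - \Delta t\|\nabla\mu\|^2 - \tfrac{\epsilon^2}{2}\|\nabla(u^n-u^{n-1})\|^2$, and unconditional energy stability follows since every subtracted term is nonnegative, with no restriction on $\Delta t$. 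I expect the main obstacle to be the careful treatment of the nonlocal $\sigma$-term: one must pick the test functions so that the discrete chemical potential $\mu^n$ is correctly identified and the operator $(-\Delta)^{-1}$ (with the Neumann boundary condition and the zero-mean constraint on $u-m$) is handled rigorously, ensuring the cross terms telescope cleanly into the quadratic nonlocal energy rather than leaving an uncontrolled remainder.
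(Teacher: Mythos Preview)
Your plan is essentially correct and shares all the key ingredients with the paper's proof: the identity $a(a-b)=\tfrac12(a^2-b^2)+\tfrac12(a-b)^2$ for the gradient and nonlocal quadratic terms, and convexity of $\Phi_\pm$ for the bulk term (the paper phrases this via Taylor expansion with nonnegative second-derivative remainder, which is the same thing). The organization differs: the paper does not test with $w^n$; it tests \emph{both} equations of \cref{eq:semidiscretedCH} with $v=u^n-u^{n-1}$, introduces the discrete chemical potential explicitly as $\mu^n:=w^n+\sigma(-\Delta)^{-1}(u^n-m)$, and reads the first equation as $(u^n-u^{n-1})/\Delta t=\Delta\mu^n$ in weak form, yielding $(\mu^n,u^n-u^{n-1})=-\Delta t\,\|\nabla\mu^n\|^2$ directly. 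The nonlocal energy increment then falls out of the remaining piece $\sigma\bigl((-\Delta)^{-1}(u^n-m),\,u^n-u^{n-1}\bigr)$ via the $a(a-b)$ identity.

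One point to correct in your write-up: you cannot ``identify $\nabla\mu$ with $\nabla w^n$'' when $\sigma\neq 0$, since $\mu^n=w^n+\sigma(-\Delta)^{-1}(u^n-m)$ and hence $\|\nabla\mu^n\|^2=\|\nabla w^n\|^2+2\sigma(w^n,u^n-m)+\sigma^2\|(-\Delta)^{-1/2}(u^n-m)\|^2$. In your route the $\|\nabla w^n\|^2$ from step one, the leftover cross term $\sigma(u^n-m,w^n)$, and a $\sigma^2$ contribution from your third test must combine to form $\|\nabla\mu^n\|^2$; the cross term is \emph{not} what becomes the nonlocal energy increment, contrary to what your step three suggests. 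Once you track this bookkeeping correctly---equivalently, test the first equation with $\mu^n$ rather than $w^n$---your argument and the paper's coincide. The paper's organization has the modest advantage of introducing $\mu^n$ once and avoiding a separate third test.
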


\begin{proof}
	See Appendix A.
\end{proof}

\subsection{Finite element discretization}
\label{subsec:fem}

Next, we discretize the semi-discrete system
\eqref{eq:semidiscretedCH} in space by FEM.
Denote $X=H^{1}(\Omega)=W^{1,2}(\Omega)$.
$h$ is a real positive number and
$\tau_h=\{K_{\Omega}: \cup_{K_{\Omega}\subset\Omega}\bar{K}_{\Omega}=\bar{\Omega}\}$ is a quasi-uniform regular
partition of $\Omega$ with diameters bounded by $h$.
For a given $\tau_h$, FEM space $V_h\subset X$ is defined by
\begin{align*}
	V_h=\{v_h \in C(\bar{\Omega})\cap X : v_h\vert_{K_{\Omega}} \in P_1(K_{\Omega}),
	~~\forall\, K_{\Omega}\in \tau_h\},
\end{align*}
where $P_1(K_{\Omega})$ is the polynomial space of degree, not greater than $1$.
The full-discrete system for \eqref{eq:weakCH} is
obtained by seeking for the test function $v_h \in V_h$ such that
\begin{subequations}
	\label{eq:fullCH}
	\begin{align}
		\label{eq:fullCH-1}
		\Big(\frac{u^n_h-u^{n-1}_h}{\Delta t},v_h \Big)+(\nabla w^n_h, \nabla v_h)
		+ \sigma(u^n_h - m ,v_h )=0,\\
		\label{eq:fullCH-2}
		(w^n_h,v_h)-\epsilon^2(\nabla u^n_h, \nabla
		v_h)-(\Phi'_+(u^n_h)-\Phi'_-(u^{n-1}_h), v_h)=0.
	\end{align}
\end{subequations}

The sequence $u_h^n$ generated by \eqref{eq:fullCH} is bounded uniformly in $h$, as
shown in Theorem \ref{thm:bound}.
\begin{theorem}\label{thm:bound}
	The sequence $u_h^n$ defined by \eqref{eq:fullCH} is bounded, i.e.,
	\begin{align}
		\|u_h^n\|_{L^\infty(\Omega)}\leq C(\epsilon, \sigma, u_0, m, c, c_s, T,\vert \Omega\vert),
		~~n=1,2,\cdots,N,\label{eq:bound}
	\end{align}
	where $c$ and $c_s$ depend only on the space dimension $d$ and $\Omega$.
\end{theorem}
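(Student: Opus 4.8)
The plan is to derive an energy-type \textit{a priori} estimate for the fully discrete scheme \cref{eq:fullCH} that controls $\|u_h^n\|$ uniformly in $h$, mirroring the argument of \cref{thm:dissp} at the fully discrete level. First I would test \cref{eq:fullCH-1} with $v_h = \Delta t\, w_h^n$ and \cref{eq:fullCH-2} with $v_h = u_h^n - u_h^{n-1}$, then add the two identities so that the cross terms $(\nabla w_h^n, \nabla(u_h^n-u_h^{n-1}))$ cancel. Using the convexity-based algebraic inequalities for $\Phi_+$ and $\Phi_-$ (namely $\Phi'_+(a)(a-b)\ge \Phi_+(a)-\Phi_+(b)$ and $\Phi'_-(b)(a-b)\le \Phi_-(a)-\Phi_-(b)+\tfrac12(a-b)^2$, which are exactly what makes the CSS energy stable), together with the term $\sigma(u_h^n-m, u_h^n-u_h^{n-1})$, this yields a discrete energy dissipation inequality $E_h(u_h^n) \le E_h(u_h^{n-1})$, where $E_h$ is the discrete analogue of \cref{eq:okenergy} restricted to $V_h$. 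Summing over the time levels and invoking $E_h(u_h^0)\le E(u_0)$ for a suitable (e.g.\ interpolation or $L^2$-projection) choice of $u_h^0$, I obtain a bound on $E_h(u_h^n)$ independent of both $n$ and $h$.

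The next step is to convert the energy bound into an $L^2$ bound on $u_h^n$. The term $\tfrac14\int_\Omega(1-(u_h^n)^2)^2$ controls $\|u_h^n\|_{L^4}^4$ up to lower-order contributions, and since $\Omega$ is bounded, $\|u_h^n\| \le |\Omega|^{1/4}\|u_h^n\|_{L^4}$, which gives the desired estimate with a constant depending on $\epsilon,\sigma,u_0,m,T,|\Omega|$. The mass-conservation identity obtained by taking $v_h \equiv 1$ in \cref{eq:fullCH-1} (using that $w$-gradient and the $(u_h^n-m)$ term integrate appropriately, or more precisely that the discrete average of $u_h^n$ stays equal to $m$ up to the $\sigma$ relaxation) helps pin down the mean value $m$ and handle the nonlocal term $\tfrac\sigma2\|(-\Delta)^{-1/2}(u_h^n-m)\|^2$, which is nonnegative and hence only aids the bound. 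The constant $c$ entering \cref{eq:bound} is the norm of the relevant embedding/inverse inequality on $V_h$, depending only on $d$ and $\Omega$.

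The main obstacle I anticipate is handling the nonlocal term $\tfrac\sigma2|(-\Delta)^{-1/2}(u-m)|^2$ carefully at the discrete level: one must make sense of the discrete inverse Laplacian (with Neumann boundary conditions, acting on mean-zero functions) and verify that the cross terms generated when testing do indeed telescope into the discrete energy rather than producing uncontrolled remainders. A secondary technical point is ensuring the argument does not secretly require a mesh condition — that is, the inverse inequality $\|u_h^n\|\le c\|u_h^n\|_{L^4}$-type step and the coercivity of $E_h$ must be uniform in $h$, which is automatic on bounded $\Omega$ but should be stated. Once the discrete energy inequality is in place, the remaining steps are routine summation and application of Young's inequality; I would also remark that this estimate is essentially the fully discrete counterpart of \cref{eq:energydissp} and that, as in \cite{P.Q.2012}, it furnishes the compactness needed for convergence, though only the boundedness claim \cref{eq:bound} is asserted here.
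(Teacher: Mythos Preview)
Your strategy is viable but differs substantially from the paper's argument. The paper does \emph{not} attempt to build a discrete energy containing the nonlocal term $\tfrac{\sigma}{2}\|(-\Delta_h)^{-1/2}(u_h^n-m)\|^2$; instead it treats $\sigma(u_h^n-m,w_h^n)$ as a perturbation. After the same testing step you describe (note: the terms that cancel are $(w_h^n,u_h^n-u_h^{n-1})$, not $(\nabla w_h^n,\nabla(u_h^n-u_h^{n-1}))$, and one subtracts rather than adds), the paper uses mass conservation to replace $w_h^n$ by $w_h^n-\overline{w_h^n}$, applies Poincar\'e twice to bound $|(u_h^n-m,w_h^n)|\le c^2\|\nabla u_h^n\|\,\|\nabla w_h^n\|$, absorbs $\|\nabla w_h^n\|^2$ by Young into the dissipation, and is left with a residual $\sigma^2c^4\Delta t\|\nabla u_h^n\|^2$ on the right. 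A discrete Gr\"onwall argument (under the mild restriction $\sigma^2c^4\Delta t\le \epsilon^2/2$) then yields $\|\nabla u_h^n\|^2\le e^{\mathcal{L}^*T}\mathcal{K}^*$, and a final Poincar\'e step gives $\|u_h^n\|\le m|\Omega|+c\|\nabla u_h^n\|$. Thus the constant $c$ in the statement is precisely the Poincar\'e constant of \cref{le1}, not an $L^4\hookrightarrow L^2$ embedding constant. Your route---testing additionally with the discrete chemical potential $\mu_h^n=w_h^n+\sigma(-\Delta_h)^{-1}(u_h^n-m)$ so that the $\sigma$ term telescopes exactly---would yield a monotone discrete energy with no Gr\"onwall step and no time-step restriction, at the cost of introducing the discrete inverse Laplacian you flag as the main obstacle. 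The paper's approach trades a slightly weaker (exponentially $T$-dependent) bound for avoiding that machinery entirely.
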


\begin{proof}
	See Appendix B.
\end{proof}

\section{Convergence analysis}
\label{sec:CA}
This section applies Newton method and Picard method to treat the nonlinear term.
The Newton method of linearizing $(u^n_h)^3$
in \cite{J.2017} and \cite{P.J.2017} is
\begin{align}
	(u^n_h)^3 \approx (u^{n,k-1}_h)^3 + 3 (u^{n,k-1}_h)^2(u^{n,k}_h- u^{n,k-1}_h).
	\label{eq:newton}
\end{align}
The Picard method of linearizing $(u^n_h)^3$
in \cite{L.2018} and \cite{P.Q.2012} is
\begin{align}
	(u^n_h)^3 \approx (u^{n,k-1}_h)^2\,u^{n,k}_h,
	\label{eq:vn}
\end{align}
where $u^{n,0}_h=u^{n-1}_h.$

To the best of our knowledge, there has been no convergence analysis for
Newton method in related literature. One of our main works is providing the convergence analysis for Newton method and compares the convergence rates of these two methods in our numerical examples. Next, we establish the theoretical analysis for Newton method.

\subsection{Bilinear forms}
From \eqref{eq:fullCH}, we have
\begin{align}
	\epsilon^2(\nabla u^n_h, \nabla
	v_h)+((u^n_h)^3, v_h)+(1+\Delta t\sigma)(u^n_h,v_h)
	-(w^n_h,v_h)+\Delta t(\nabla w^n_h, \nabla v_h)
	=(f,v_h),\label{3.2}
\end{align}
where
\begin{align}
	f=2u^{n-1}_h+\Delta t\sigma m\label{definition:f}.
\end{align}
To simplify the following analysis, we introduce the bilinear forms.
For $u,~w\in X$, denote
\begin{align*}
	\Psi(u,w;v)&=\epsilon^2(\nabla u,\nabla v)
	+(1+\Delta t\sigma)(u,v)+(u^3,v)-(f,v)
	+\Delta t(\nabla w,\nabla v)-(w,v)\\
	&=\Psi_1(u,v)+\Psi_2(w,v),
\end{align*}
where
\begin{align*}
	\Psi_1(u,v)&=(F_1(\cdot,u,\nabla u),\nabla v)
	+(g_1(\cdot,u,\nabla u), v),\\
	\Psi_2(w,v)&=(F_2(\cdot,w,\nabla w),\nabla v)
	+(g_2(\cdot,w,\nabla w), v),
\end{align*}
and
\begin{align*}
	&F_1(x,u,\nabla u)=\epsilon^2\nabla u,~g_1(x,u,\nabla u)=(1+\Delta t\sigma) u+u^3-f,\\
	&F_2(x,w,\nabla w)=\Delta t \nabla w,~g_2(x,w,\nabla w)=-w.
\end{align*}
Therefore, \eqref{3.2} is equivalent to
\begin{align}
	\Psi(u^n_h,w^n_h;v_h)=0.\label{3.3}
\end{align}
Evidently, $\Psi(u,w;v)$ is a linear combination of bilinear forms
$\Psi_1$ and $\Psi_2$.
For $i=1,2$, note that
\begin{align*}
	F_i(x,y,z):\Omega\times \mathbb{R}^1 \times \mathbb{R}^d\rightarrow
	\mathbb{R}^d ~~\mbox{and}~~ g_i(x,y,z):\Omega\times \mathbb{R}^1 \times
	\mathbb{R}^d\rightarrow \mathbb{R}^1
\end{align*}
are smooth functions. For any $u, w\in X$, we have
\begin{align*}
	&F_{1,z}(x,u,\nabla u)=\epsilon^2\,\mathbf{I}_2,~F_{1,y}(x,u,\nabla u)=\mathbf{0}_2,\\
	&g_{1,z}(x,u,\nabla u)=\mathbf{0}_2,~g_{1,y}(x,u,\nabla u)=1+\Delta t\sigma+3u^2,\\
	&F_{2,z}(x,w,\nabla w)=\Delta t\,\mathbf{I}_2,~F_{2,y}(x,w,\nabla w)=\mathbf{0}_2,\\
	&g_{2,z}(x,w,\nabla w)=\mathbf{0}_2,~g_{2,y}(x,w,\nabla w)=-1,
\end{align*}
where $\mathbf{I}_2$ and $\mathbf{0}_2$ represent 2-order identity matrix and 2-order zero matrix, respectively.

Similar to \cite{H.2013}, from \eqref{3.3}, we introduce the bilinear form
\begin{align}
	\Psi^{'}(u,w;v_1,v_2,v)=\Psi^{'}_1(u;v_1,v)+\Psi^{'}_2(w;v_2,v),
	\label{le:00}
\end{align}
where
\begin{align*}
	\Psi^{'}_1(u;v_1,v)
	&=(F_{1,z}\nabla v_1+F_{1,y}v_1,\nabla v)+(g_{1,z}\,\nabla v_1+g_{1,y}v_1,v)\\
	&=(\epsilon^2\nabla v_1,\nabla v)
	+(1+\Delta t \sigma)(v_1,v)+3(u^2v_1,v),\\
	\Psi^{'}_2(w;v_2,v)&=(F_{2,z}\nabla v_2
	+F_{2,y}v_2,\nabla v)+(g_{2,z}\,\nabla v_2+g_{2,y}v_2,v)\\
	&=\Delta t(\nabla v_2,\nabla v)-(v_2,v).
\end{align*}
Obviously,
\begin{align}
	\Psi_2(w,v)=\Psi^{'}_2(\cdot;w,v).\label{3.4}
\end{align}

\subsection{Convergence analysis for Newton method}

Next, we present the error estimate for Newton method.
Applying \eqref{eq:newton} to \eqref{eq:semidiscretedCH} yields 
\begin{subequations}
	\label{eq:Newtonnonlinear}
	\begin{align}
		&(1+\sigma \Delta t)(u^{n,k}_h, v_h)
		+\Delta t(\nabla w^{n,k}_h,\nabla v_h)
		=\Delta t \sigma (m,v_h)+(u^{n-1}_h,v_h),\\
		&(w^{n,k}_h,v_h)-\epsilon^2(\nabla u^{n,k}_h,\nabla v_h)
		+(-3(u^{n,k-1})^2(u^{n,k}_h),v_h)
		+(u^{n-1}_h,v_h)=-2((u^{n,k-1})^3,v_h).
	\end{align}
\end{subequations}
\eqref{eq:Newtonnonlinear} can be rewritten as
\begin{align*}
	&\Psi^{'}_1(u_h^{n,k-1};u_h^{n,k},v_h)
	+\Psi^{'}_2(w_h^{n,k-1};w_h^{n,k},v_h)= \\
	&\Psi^{'}_1(u_h^{n,k-1};u_h^{n,k-1},v_h)
	+\Psi^{'}_2(w_h^{n,k-1};w_h^{n,k-1},v_h)\\
	&-\Psi_1(u^{n,k-1},v_h)-\Psi_2(w^{n,k-1},v_h),~\forall v_h\in V_h.
\end{align*}
Therefore,
\begin{align}
	\Psi^{'}_1(u_h^{n,k-1};u_h^{n,k},v_h)
	+\Psi^{'}_2(w_h^{n,k-1};w_h^{n,k},v_h)=
	\Psi^{'}_1(u_h^{n,k-1};u_h^{n,k-1},v_h)
	-\Psi_1(u^{n,k-1},v_h).\label{Newton:2}
\end{align}
The error estimate for Newton method is illustrated by Theorem \ref{th:Newton Error}.

\begin{theorem}
	\label{th:Newton Error}
	Let $u_h^{n}$ and $u_h^{n,\tilde{m}}$ be the solutions of problems \eqref{eq:fullCH} and \eqref{eq:Newtonnonlinear}. Assume that $h$ is sufficiently small, take $u^{n,0}_h=u^{n-1}_h$ and $0\leq\eta<1$ such that
	\begin{align}
		\Vert u^{n,0}_h-u_h^{n}\Vert_{1,\infty}\leq \eta.\label{initial}
	\end{align}
		For positive constants $c_1,~c_2,~c_3,~C$ and $2\leq p\leq\infty$, if
		the convergence factor $\rho$ satisfies
\begin{align}
\rho=\frac{c_3\eta C\vert\log h\vert}
{1-c_3[(C+1)(c_2+\eta)+c_1]\vert\log h\vert}<1,
\label{3.13}
\end{align}
then
	\begin{align}
		\Vert u^n_h-u_h^{n,\tilde{m}}\Vert_{1,p}\leq c\rho^{2^{\tilde{m}}},
		\label{3.14}
	\end{align}
	where $c$ is a canstant.
\end{theorem}

\begin{proof}
	See Appendix C.
\end{proof}


Besides, we show the fully-discrete scheme for Picard method.
Applying \eqref{eq:vn} to \eqref{eq:semidiscretedCH} yields
\begin{subequations}
	\label{eq:Picardnonlinear}
	\begin{align}
		&(1+\sigma \Delta t)(u^{n,k}_h, v_h)+\Delta t(\nabla w^{n,k}_h,\nabla v_h)=\Delta t \sigma (m,v_h)+(u^{n-1}_h,v_h),\label{eq:Picardnonlinear1}\\
		&(w^{n,k}_h,v_h)-\epsilon^2(\nabla u^{n,k}_h,\nabla v_h)+(-(u^{n,k-1})^2(u^{n,k}_h),v_h)+(u^{n-1}_h,v_h)=0.\label{eq:Picardnonlinear2}
	\end{align}
\end{subequations}
Picard method has linear convergence and the convergence speed of Newton method is faster than that of Picard method. It will be verified by numerical experiments in Section \ref{sec:rslts} as well.

%

\section{Block preconditioners}

\label{subsec:implementation}

In practical implementation, we impose a uniform discretization on
the spatial domain $\tau_h$ for each $t_n=n\Delta t$ $(0\leq n\leq N)$.
Denote the dimension of $V_h$ as $\kappa$. We use the set of piecewise linear $\phi_i$ as the basis functions that are defined in the usual way.
Therefore, $V_h$ can be spanned in terms of $\phi_i$ as
\begin{align*}
	V_h = \mbox{span}\{\phi_i\}_{i=0}^{\kappa-1},
\end{align*}
$u_h^n$ and $w_h^n$ can be expressed as
\begin{align*}
	u_h^n=\sum_{i=0}^{\kappa-1}U_i^n\phi_i(x), ~~~
	w_h^n=\sum_{i=0}^{\kappa-1}W_i^n\phi_i(x).
\end{align*}
Let $v_h=\phi_j$, the discretized system \eqref{eq:fullCH} can be written as
\begin{subequations}
	\label{eq:implCH}
	\begin{align}
		\label{eq:implCH-1}
		(1+\sigma \Delta t )\sum_{i=0}^{\kappa-1}U_i^n(\phi_i,\phi_j)+\Delta t \sum_{i=0}^{\kappa-1}W_i^n(\nabla\phi_i,\nabla\phi_j)
		=\sum_{i=0}^{\kappa-1}U_i^{n-1}(\phi_i,\phi_j)+\sigma\Delta t(m,\phi_j),\\
		\label{eq:implCH-2}
		\sum_{i=0}^{\kappa-1}W_i^n(\phi_i,\phi_j)
		=\epsilon^2
		\sum_{i=0}^{\kappa-1}U_i^n(\nabla\phi_i,\nabla\phi_j)+\Bigg{(}\Big(\sum_{i=0}^{\kappa-1}
		U_i^n \phi_i \Big)^3-\sum_{i=0}^{\kappa-1} U_i^{n-1}
		\phi_i,\phi_j\Bigg{)}.
	\end{align}
\end{subequations}

We use Newton method or Picard method to approximate the implicit nonlinear term in \eqref{eq:implCH-2}.
For simplicity, we show the implementation scheme of Newton method for each $j ~(0\leq j\leq \kappa-1)$ below
\begin{align}
	\Bigg(\Big(\sum_{i=0}^{\kappa-1} U_i^n \phi_i\Big)^3,\phi_j\Bigg)
	&\approx \Bigg{(}3\sum_{i=0}^{\kappa-1}U_i^{n,k} \phi_i \Big(\sum_{l=0}^{\kappa-1} U_l^{n,k-1}
	\phi_l \Big)^2,\phi_j\Bigg{)}
	-\Bigg{(}2\Big(\sum_{l=0}^{\kappa-1} U_l^{n,k-1}\phi_l \Big)^3,\phi_j\Bigg{)}\notag\\
	&=\sum_{i=0}^{\kappa-1} U_i^{n,k} \Big{(} 3\int_\Omega
	(\sum_{l=0}^{\kappa-1} U_l^{n,k-1} \phi_l)^2\phi_i\phi_j \,dx \Big{)}
	-2\int_{\Omega}u^{n,k-1}_h\phi_j\,dx.\label{1.13}
\end{align}

Define the mass and stiffness matrices as
\begin{align*}
	&M=(m_{ij}), ~~m_{ij}=(\phi_i,\phi_j)=\int_{\Omega}\phi_i\phi_j\,dx,\\
	&S=(s_{ij}),
	~~s_{ij}=(\nabla\phi_i,\nabla\phi_j)=\int_{\Omega}\nabla\phi_i
	\cdot \nabla\phi_j \,dx.
\end{align*}
Note that $M\succ 0$ and $S\succeq 0$.
For $0 \leq j \leq \kappa-1$, let
\begin{align*}
	&L^{(n,k)}=(l_{ij}^{(n,k)}),
	~l_{ij}^{(n,k)}
	=3\int_\Omega(\sum_{l=0}^{\kappa-1} U_l^{n,k-1} \phi_l)^2 \phi_i\phi_j\, dx, \\
	& U^{(n-1)} = (U_0^{(n-1)}, \cdots, U_{\kappa-1}^{(n-1)})^T,\\
	&F^{(n)}=(F_{0}^{(n)},\cdots,~F_{\kappa-1}^{(n)})^{T},~ F_{j}^{n}=(MU^{n-1})_{j}+\sigma\Delta t(m,\phi_j),\\
	&E^{(n,k)}=(E_{0}^{(n,k)},\cdots,E_{\kappa-1}^{(n,k)})^{T},~E_{j}^{n,k}=-2\int_{\Omega}u^{n,k-1}_h\phi_j\,dx.
\end{align*}
$L^{(n,k)}\succeq 0$ depends on the previous iteration solution at each time step.
The vectors $F^{n}$ and $E^{n}$ are obtained from the previous step.
Using the approximation \eqref{1.13}, \eqref{eq:implCH} is summarized as follows
\begin{align}
	\mathcal{A}\left(
	\begin{array}{c}
		U^{(n,k)}  \\
		W^{(n,k)}   \\
	\end{array}
	\right)=\left(
	\begin{array}{cc}
		(1+\sigma \Delta t)M &  \Delta t S \\
		-\epsilon^2 S-L^{(n,k)} & M   \\
	\end{array}
	\right) \left(
	\begin{array}{c}
		U^{(n,k)}  \\
		W^{(n,k)}   \\
	\end{array}
	\right)= \left(
	\begin{array}{c}
		F^{(n)}\\
		E^{(n,k)}
	\end{array}
	\right), \label{eq:discretedmatrix}
\end{align}
where
\begin{align*}
	U^{(n+1)} = \lim_{k\rightarrow \infty} U^{(n,k)}, ~~~
	W^{(n+1)} = \lim_{k\rightarrow \infty} W^{(n,k)}.
\end{align*}
The initial conditions are $U^{(n,0)}=U^{(n-1)}$ and
$W^{(n,0)}=W^{(n-1)}$.

Subsequently, it is required to solve the
discretized linear problem \eqref{eq:discretedmatrix} using linear
solvers at each time step. Due to a fast increase in memory
requirement and bad scaling properties for massively parallel
problems, the direct solvers, like UMFPACK \cite{T.A.D.2004},
MUMPS \cite{P.I.J.2001}, or SuplerLU-DIST \cite{X.J.2003},
may be difficult to solve \eqref{eq:discretedmatrix} efficiently.
Hence we use iteration methods to address these problems, such as Krylov subspace methods. The linear system \eqref{eq:discretedmatrix}
becomes more ill-conditioned as the mesh is refined.
For instance, in one dimension, Figure \ref{FIG:4} and Table \ref{tab5}
give the spectral distribution and corresponding condition
number for different subdivisions.
The ill-conditioned system reduces the performance of linear solvers and impedes
the convergence of nonlinear solvers, which is the
difficulty in numerical computation.

To improve the condition number of the linear system and
accelerate the convergence of nonlinear iteration, natural selection is using a preconditioning approach. In general, minimizing the spectral radius of the iteration matrix is not necessarily the best choice, which also holds when the algorithm is used as a preconditioner for a Krylov subspace method. For GMRES, the convergence results do not depend on the spectrum of the coefficient matrix alone \cite{Greenbaum1996nonincreasing, Arioli1998Krylov}. However, numerical calculations usually show that the tight clustering of eigenvalues for the preconditioned system frequently leads to strong convergence properties and makes the algorithm efficient. Therefore, we give the spectral analysis of proposed preconditioners.


\begin{figure}[htb]
	\centering
	\label{fig:a}
	\includegraphics[width=1.8in]{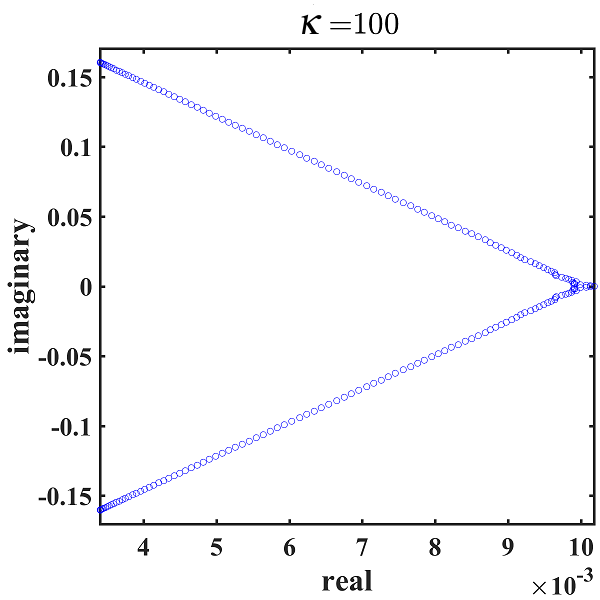}
	\hspace{0.2in}
	\label{fig:b}
	\includegraphics[width=1.8in]{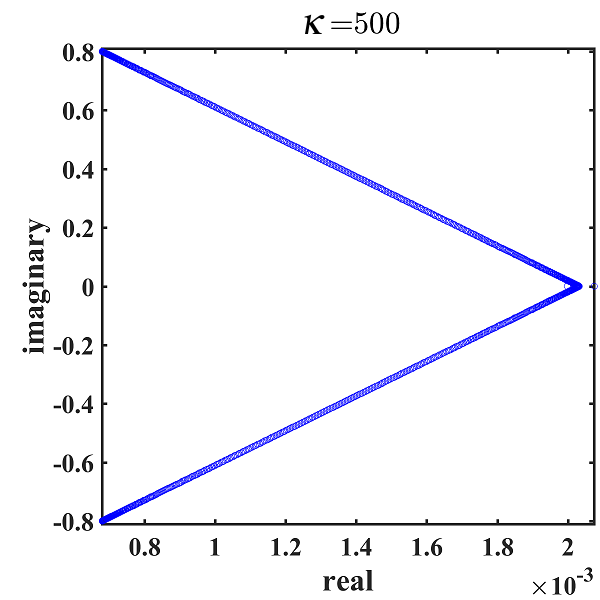}
	\label{fig:c}
	\includegraphics[width=1.8in]{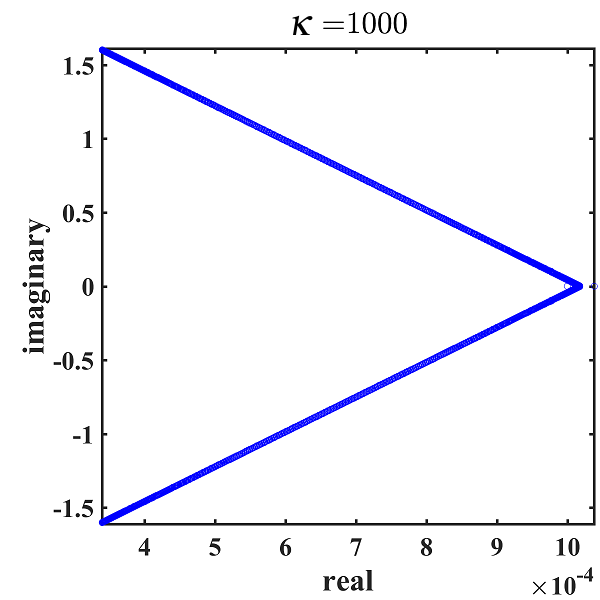}
	\hspace{0.2in}
	\label{fig:d}
	\includegraphics[width=1.8in]{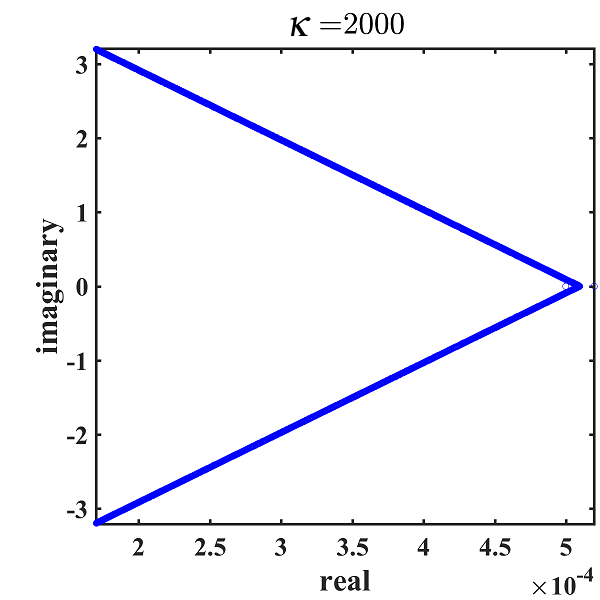}
	\caption{The spectral distribution of the linear system
		\eqref{eq:discretedmatrix} for one dimensional dynamic equation with different subdivision
		$\kappa$ when $\epsilon=0.1$, $\Delta t=0.01$,
		$\sigma=100$, $n=1$ and $k=1$.}
	\label{FIG:4}
\end{figure}
\begin{table}[htb]
	\centering
	\caption{The condition number of the linear system
		\eqref{eq:discretedmatrix} for one dimensional dynamic equation with different subdivision
		$\kappa$ when $\epsilon=0.1$, $\Delta t=0.01$, $\sigma=100$, $n=1$ and $k=1$.}
	\label{tab5}
	\renewcommand\arraystretch{1}
	\setlength{\tabcolsep}{5mm}{
		\begin{tabular}{|c|c|c|c|c|}\hline
			$\kappa$                                &100          &500          & 1000                         & 2000 \\ \hline
			$\mbox{cond}(\mathcal{A})$              &0.407e+03           &1.009e+04          & 4.037e+04           &1.624e+05\\ \hline
	\end{tabular}}\\
\end{table}

\subsection{Schur complement preconditioner}
\label{sec:existpred}

From the above discussion, it is necessary to find appropriate
preconditioners for solving the discretized system \eqref{eq:discretedmatrix}.
In \cite{P.J.2017} and \cite{L.2018}, two preconditioners have been proposed for \eqref{eq:okenergy} when using the backward Euler scheme. In this subsection, we will present Schur complement preconditioners for CSS similar to the construction technique in \cite{L.2018}. Meanwhile, we will analyze the spectral distribution of the preconditioning system.

Denote $\zeta=\Delta t/(1+\sigma \Delta t).$
	Through scaling the first row of $\mathcal{A}$ by $\zeta$, we have
	\begin{align*}
		\hat{\mathcal{A}}=\left(
		\begin{array}{cc}
			M &  \zeta S \\
			-\epsilon^2 S-L^{(n,k)} & M   \\
		\end{array}
		\right).
	\end{align*}
Using Schur complement method and ignoring the influence of $L^{(n,k)}$,
the Schur block preconditioner (Schur in short) for \eqref{eq:discretedmatrix} is
\begin{align*}
	\mathcal{P}_{Schur}= \left(
	\begin{array}{cc}
		M &  \zeta S  \\
		-\epsilon^2 S & M+2 \epsilon \sqrt{\zeta} S
	\end{array}
	\right).
\end{align*}
The spectral distribution for the preconditioning system is shown as follows.
\begin{theorem}\label{BTeig}
	The eigenvalues of $\mathcal{P}_{Schur}^{-1}\hat{\mathcal{A}}$ are real and satisfy
	\begin{align*}
		\lambda(\mathcal{P}_{Schur}^{-1}\hat{\mathcal{A}})\in (\frac{1}{2},1+\frac{1}{4\epsilon}\sqrt{\zeta}\lambda_{+}),
	\end{align*}
	with $\lambda_{+}=\lambda_{\mbox{max}}(M^{-1}L^{(n,k)})$.
\end{theorem}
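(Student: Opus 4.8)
The plan is to compute the eigenvalues of $\mathcal{P}_{BT}^{-1}\mathcal{A}$ by forming the product $\mathcal{P}_{BT}^{-1}\mathcal{A}$ explicitly in block form and reducing the eigenvalue problem to a problem involving only the symmetric matrices $M$, $S$, and $L^{(n,k)}$. Write $\alpha = 1+\sigma\Delta t$, so that the $(1,1)$ blocks of both $\mathcal{A}$ and $\mathcal{P}_{BT}$ coincide (equal to $\alpha M$) and the $(2,1)$ blocks coincide (equal to $-\epsilon^2 S - L^{(n,k)}$). Because $\mathcal{P}_{BT}$ is block lower triangular with invertible diagonal blocks $\alpha M$ and $\tilde K$, a direct block computation gives
\begin{align*}
\mathcal{P}_{BT}^{-1}\mathcal{A} = \begin{pmatrix} I & \frac{\Delta t}{\alpha} M^{-1} S \\ 0 & I + \tilde K^{-1}\big(\tfrac{\Delta t}{\alpha}(\epsilon^2 S + L^{(n,k)}) M^{-1} S\big) \end{pmatrix},
\end{align*}
where I have used $\zeta = \Delta t/\alpha$. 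Hence the spectrum of $\mathcal{P}_{BT}^{-1}\mathcal{A}$ consists of the eigenvalue $1$ (with multiplicity at least $\hat M$, coming from the top-left identity block) together with the eigenvalues of the $\hat M\times\hat M$ matrix $T := I + \tilde K^{-1}\big(\zeta(\epsilon^2 S + L^{(n,k)}) M^{-1} S\big)$. It therefore suffices to locate $\lambda(T)$.

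The key step is to rewrite $\lambda \in \lambda(T)$ as $\lambda = 1 + \mu$ where $\mu$ is an eigenvalue of $\tilde K^{-1} R$ with $R := \zeta(\epsilon^2 S + L^{(n,k)}) M^{-1} S = \zeta\epsilon^2 S M^{-1} S + \zeta L^{(n,k)} M^{-1} S$. Observe that $K - R = M$, so $\tilde K^{-1} R = \tilde K^{-1}(K - M) = \tilde K^{-1} K - \tilde K^{-1} M$; this is the identity that makes $\tilde K$ a good Schur-complement approximation transparent. Using $\tilde K = (M+\epsilon\sqrt{\zeta}S) M^{-1}(M+\epsilon\sqrt{\zeta}S)$ and expanding, one finds $\tilde K = M + 2\epsilon\sqrt{\zeta} S + \epsilon^2\zeta S M^{-1} S$, so that $\tilde K - K = 2\epsilon\sqrt{\zeta} S - \zeta L^{(n,k)} M^{-1} S$ and $\tilde K - M = 2\epsilon\sqrt{\zeta}S + \epsilon^2\zeta S M^{-1}S$. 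The eigenvalues of $\tilde K^{-1} R$ are the generalized eigenvalues of the pencil $(R, \tilde K)$: there is a nonzero $x$ with $R x = \mu \tilde K x$. Perform the congruence / substitution that symmetrizes the pencil. Since $M>0$ and $\tilde K>0$, I will set $y = M^{-1/2}$-type coordinates (more precisely, use that $\tilde K^{-1} R$ is similar to $\tilde K^{-1/2} R \tilde K^{-1/2}$ only if $R$ is symmetric — it is not, because of the $L^{(n,k)}M^{-1}S$ term, so instead one works directly with the real generalized eigenproblem and argues the eigenvalues are real via a secondary symmetrization). Concretely, substitute $S = M^{1/2}\hat S M^{1/2}$, $L^{(n,k)} = M^{1/2}\hat L M^{1/2}$ with $\hat S \ge 0$, $\hat L > 0$ symmetric, and $z = M^{1/2}x$; the pencil becomes $\big(\zeta\epsilon^2\hat S^2 + \zeta\hat L\hat S\big) z = \mu\, (I+\epsilon\sqrt\zeta\hat S)^2 z$. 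Since $\hat S$ and $\hat L$ need not commute, I would diagonalize $\hat S = Q\,\mathrm{diag}(s_i)\,Q^T$ and pass to the basis of $\hat S$-eigenvectors, reducing to bounding, for each $i$, expressions of the form $\mu = \dfrac{\zeta\epsilon^2 s_i^2 + \zeta s_i (Q^T\hat L Q\, \cdot)_{\text{component}}}{(1+\epsilon\sqrt\zeta s_i)^2}$; a Rayleigh-quotient / interlacing argument then gives the two-sided bound.

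For the lower bound: since $R\ge 0$ — indeed $\zeta\epsilon^2 SM^{-1}S\ge 0$ and the remaining term contributes nonnegatively after symmetrization because $\hat L\hat S + \hat S\hat L$ can be controlled — actually the cleanest route is to note $R = K - M$ and $K > M$ is false in general, so instead I bound $\mu > -\tfrac12$ by showing $R + \tfrac12\tilde K > 0$, i.e. $\zeta\epsilon^2 SM^{-1}S + \zeta L^{(n,k)}M^{-1}S + \tfrac12(M+2\epsilon\sqrt\zeta S + \epsilon^2\zeta SM^{-1}S) > 0$, which after symmetrization reduces to a scalar inequality in each $\hat S$-eigendirection and follows from completing the square in $s_i$. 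This yields $\lambda = 1+\mu > \tfrac12$. For the upper bound: drop the nonnegative $\zeta\epsilon^2\hat S^2$ against the denominator using $\epsilon^2\zeta s_i^2 \le (1+\epsilon\sqrt\zeta s_i)^2$, and bound the cross term $\zeta s_i\langle\hat L\cdot\rangle/(1+\epsilon\sqrt\zeta s_i)^2 \le \tfrac{\sqrt\zeta}{4\epsilon}\lambda_{\max}(\hat L) = \tfrac{\sqrt\zeta}{4\epsilon}\lambda_+$ using $s_i/(1+\epsilon\sqrt\zeta s_i)^2 \le \tfrac{1}{4\epsilon\sqrt\zeta}$ (maximized at $s_i = 1/(\epsilon\sqrt\zeta)$) together with $\lambda_{\max}(M^{-1}L^{(n,k)}) = \lambda_{\max}(\hat L) = \lambda_+$. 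This gives $\mu < 1 + \tfrac{\sqrt\zeta}{4\epsilon}\lambda_+$, hence $\lambda < 1 + \tfrac{1}{4\epsilon}\sqrt\zeta\,\lambda_+$, and in particular shows the eigenvalues are real.

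The main obstacle I anticipate is the non-commutativity of $S$ and $L^{(n,k)}$: because $R$ is not symmetric, one cannot immediately claim the eigenvalues of $\tilde K^{-1}R$ are real, and the naive Rayleigh-quotient bounds require care. The resolution is to symmetrize using $M^{1/2}$ as above and then work in the eigenbasis of $\hat S$, where the denominator $(1+\epsilon\sqrt\zeta s_i)^2$ becomes diagonal and the remaining coupling through $\hat L$ is handled by the estimate $\lambda_{\max}(\hat L)=\lambda_+$; verifying that this symmetrized pencil indeed has real spectrum (equivalently, that $\tilde K^{-1/2}R_{\mathrm{sym}}\tilde K^{-1/2}$ or an equivalent symmetric surrogate governs the spectrum) is the technical heart of the argument. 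The completing-the-square step for the lower bound is routine once the reduction is in place.
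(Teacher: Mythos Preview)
Your block computation of $\mathcal{P}_{BT}^{-1}\mathcal{A}$ is incorrect: the $(2,2)$ block is
\[
\tilde K^{-1}\bigl(M+\zeta(\epsilon^2 S+L^{(n,k)})M^{-1}S\bigr)=\tilde K^{-1}K,
\]
not $I+\tilde K^{-1}R$ with $R=K-M$. You have tacitly replaced $\tilde K^{-1}M$ by $I$, which would require $\tilde K=M$. Since all of your subsequent analysis (writing $\lambda=1+\mu$ with $\mu\in\sigma(\tilde K^{-1}R)$, the lower bound via $R+\tfrac12\tilde K>0$, and the upper bound) rests on this form, the argument does not establish the stated bounds. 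In particular your upper-bound arithmetic already signals the problem: you obtain $\mu<1+\tfrac{\sqrt{\zeta}}{4\epsilon}\lambda_+$ and then conclude $\lambda<1+\tfrac{\sqrt{\zeta}}{4\epsilon}\lambda_+$, which is inconsistent with $\lambda=1+\mu$.

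The second, more fundamental gap is the one you yourself flag: reality of the eigenvalues and the Rayleigh-quotient bounds both fail because the pencil $(R,\tilde K)$ (or, correctly, $(K,\tilde K)$) is not symmetric, owing to the $L^{(n,k)}M^{-1}S$ term. Your proposed fix --- diagonalize $\hat S$ and read off a scalar per eigenvector --- does not work: in the $\hat S$-eigenbasis the matrix $\hat L$ is still full, so the pencil does not decouple and there is no well-defined ``component'' expression for $\mu$. The paper resolves this with a simple trick you are missing: for $v\notin\mathrm{null}(S)$, premultiply the generalized eigenproblem $Kv=\lambda\tilde Kv$ by $SM^{-1}$. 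Then both
\[
F:=SM^{-1}K=S+\epsilon^2\zeta\,SM^{-1}SM^{-1}S+\zeta\,SM^{-1}L^{(n,k)}M^{-1}S,
\qquad
G:=SM^{-1}\tilde K
\]
are symmetric (the troublesome term becomes $SM^{-1}L^{(n,k)}M^{-1}S$), so $\lambda=v^\ast Fv/v^\ast Gv$ is real. Writing this ratio as $1-\mathcal R_1+\mathcal R_2$ and bounding $\mathcal R_1\in(0,\tfrac12]$ via $a^\ast a+b^\ast b\ge a^\ast b+b^\ast a$ and $\mathcal R_2\le \tfrac{\sqrt{\zeta}}{4\epsilon}\lambda_+$ via the scalar inequality $t/(1+\epsilon\sqrt{\zeta}\,t)^2\le 1/(4\epsilon\sqrt{\zeta})$ then yields the interval directly. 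The $SM^{-1}$ premultiplication is the key symmetrization step; once you have it, the non-commutativity obstacle disappears.
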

\begin{proof}
	Note that
	\begin{align*}
		\tilde{K}=(M+\epsilon \sqrt{\zeta} S)M^{-1}(M+\epsilon \sqrt{\zeta} S)
	\end{align*}
	is a good approximation of the Schur complement \cite{Pearson2012new}\cite{Pearson2018matching}
	\begin{align*}
		K=M+\epsilon^2 \zeta SM^{-1}S+\zeta L^{(n,k)}M^{-1}S.
	\end{align*}
	It is obvious that
	\begin{align}
		\mathcal{P}_{Schur}^{-1}\hat{\mathcal{A}}
		=\begin{pmatrix}
			I & \zeta M^{-1} S\\
			0 & I
		\end{pmatrix}^{-1}
		\begin{pmatrix}
			I             &   0\\
			\tilde{K}^{-1}L^{(n,k)}& \tilde{K}^{-1}K
		\end{pmatrix}
		\begin{pmatrix}
			I & \zeta M^{-1} S\\
			0 & I
		\end{pmatrix}.
		\label{0.38}
	\end{align}
	Therefore, it is required to prove
	\begin{align*}
		\lambda(\tilde{K}^{-1}K)\in (\frac{1}{2},1+\frac{1}{4\epsilon}\sqrt{\zeta}\lambda_{+}).
	\end{align*}
	
	Suppose the corresponding eigenvector $v$ satisfies
	\begin{align*}
		Kv=\lambda \tilde{K}v.
	\end{align*}
	
	i) If $v\in \mbox{null}(S)$, then
	$Mv=\lambda Mv$, which implies that $\lambda=1$.
	
	ii) If $v\notin \mbox{null}(S)$, we have
	$$SM^{-1}Kv=\lambda SM^{-1}\tilde{K}v.$$
	It follows that
	\begin{align}
		v^*Fv=\lambda v^*Gv,
		\label{1.01}
	\end{align}
	where
	\begin{align*}
		&F=SM^{-1}K=S+\epsilon^2 \zeta SM^{-1}SM^{-1}S+\zeta SM^{-1}L^{(n,k)}M^{-1}S,\\
		&G=SM^{-1}\tilde{K}=S+\epsilon^2 \zeta SM^{-1}SM^{-1}S+2\epsilon \sqrt{\zeta} SM^{-1}S.
	\end{align*}
	Since $F$ and $G$ are real symmetric, $v^*Fv$ and $v^*Gv$ are real (with $v^*Gv$ also positive).
	The eigenvalues of $\lambda(\tilde{K}^{-1}K)$ are real.
	Using \eqref{1.01}, we get
	\begin{align}
		\nonumber\lambda=\frac{v^*Fv}{v^*Gv}&=\frac{v^*Sv+\epsilon^2 \zeta v^*SM^{-1}SM^{-1}Sv+\zeta v^*SM^{-1}L^{(n,k)}M^{-1}Sv}
		{v^*Sv+\epsilon^2 \zeta v^*SM^{-1}SM^{-1}Sv + 2\epsilon \sqrt{\zeta} v^*SM^{-1}Sv}\\
		\nonumber&=1-\frac{2\epsilon \sqrt{\zeta} v^*SM^{-1}Sv}{v^*Sv+2\epsilon \sqrt{\zeta} v^*SM^{-1}Sv+\epsilon^2 \zeta v^*SM^{-1}SM^{-1}Sv}\\
		\nonumber&~~~~+\frac{\zeta v^*SM^{-1}L^{(n,k)}M^{-1}Sv}{v^*Sv+2\epsilon \sqrt{\zeta} v^*SM^{-1}Sv+\epsilon^2 \zeta v^*SM^{-1}SM^{-1}Sv}\\
		&=1-\mathcal{R}_{1}+\mathcal{R}_{2}.
		\label{1.02}
	\end{align}
	It is evident that $\mathcal{R}_{1}>0$.
	Writing
	$$a=S^{\frac{1}{2}}v,~b=\epsilon \sqrt{\zeta} S^{\frac{1}{2}}M^{-1}Sv,$$
	combining with the inequality $a^*a+b^*b\geq a^*b+b^*a$,
	it follows that
	$$\mathcal{R}_{1}=(a^*b+b^*a)/(a^*a+b^*b+a^*b+b^*a)\leq \frac{1}{2}.$$
	Therefore, we have
	\begin{align}
		\mathcal{R}_{1}\in (0,\frac{1}{2}].
		\label{1.03}
	\end{align}
	Since
	\begin{align*}
		\mathcal{R}_{2}&=\zeta \frac{v^*SM^{-1}L^{(n,k)}M^{-1}Sv}{v^*SM^{-1}Sv}
		\cdotp \frac{v^*SM^{-1}Sv}{v^*Sv+2\epsilon \sqrt{\zeta} v^*SM^{-1}Sv+\epsilon^2 \zeta v^*SM^{-1}SM^{-1}Sv}\\
		&=\zeta \mathcal{R}_{21} \cdotp \mathcal{R}_{22},
	\end{align*}
	then
	\begin{align*}
		\mathcal{R}_{22}=\frac{1}{2\epsilon \sqrt{\zeta}} \mathcal{R}_{1},
	\end{align*}
	which shows that
	\begin{align*}
		\mathcal{R}_{22}\in (0,\frac{1}{4\epsilon \sqrt{\zeta}}].
	\end{align*}
	Denote $\bar{v}=M^{-\frac{1}{2}}Sv$, we immediately have
	\begin{align*}
		\mathcal{R}_{21}=\frac{\bar{v}^*M^{-\frac{1}{2}}L^{(n,k)}M^{-\frac{1}{2}}\bar{v}}{\bar{v}^*\bar{v}}
		\in [\lambda_{\mbox{min}}(M^{-1}L^{(n,k)}), \lambda_{\mbox{max}}(M^{-1}L^{(n,k)})],
	\end{align*}
	where $\lambda_{\mbox{min}}(M^{-1}L^{(n,k)}) > 0$. Combining with the derived bounds, we can obtain
	\begin{align}
		\mathcal{R}_{2}\in (0,\frac{1}{4\epsilon} \sqrt{\zeta}\lambda_+].
		\label{1.04}
	\end{align}
	Substituting \eqref{1.03} and \eqref{1.04} into \eqref{1.02}, it follows that
	\begin{align*}
		\lambda(\tilde{K}^{-1}K)\in (\frac{1}{2},1+\frac{1}{4\epsilon}\sqrt{\zeta}\lambda_{+}),
	\end{align*}
	and this proof is completed.
\end{proof}

\subsection{MHSS preconditioner}
\label{sec:ourmethod}

In this subsection, we propose a block preconditioner based on MHSS and analyze the eigenvalue distribution of the preconditioning system.

The linear system \eqref{eq:discretedmatrix} can be rearranged equivalently to the two-by-two block system
\begin{align}
	\tilde{\mathcal{A}}\left(
	\begin{array}{c}
		U^{(n,k)}  \\
		W^{(n,k)}   \\
	\end{array}
	\right)=\left(
	\begin{array}{cc}
		\epsilon^2 S+L^{(n,k)}& -M \\
		M & \zeta S\\
	\end{array}
	\right) \left(
	\begin{array}{c}
		U^{(n,k)}  \\
		W^{(n,k)}   \\
	\end{array}
	\right)=
	\left(
	\begin{array}{c}
		-E^{(n,k)}\\
		\frac{1}{1+\sigma \Delta t}F^{(n)}
	\end{array}
	\right)=b.\label{1.15}
\end{align}
Let
$A=\epsilon^2 S+L^{(n,k)}, ~B=\zeta S$, then
\begin{align*}
	\tilde{\mathcal{A}}=\left(
	\begin{array}{cc}
		A& -M \\
		M & B\\
	\end{array}
	\right),
\end{align*}
where $A\in \mathbb{R}^{\kappa \times \kappa }$ and
$A\succ 0$, $B\in \mathbb{R}^{\kappa\times \kappa}$ and $B\succeq 0$.
The rearranged linear system \eqref{1.15} is a
generalized saddle point problem, which widely exists in
scientific computing and numerical algebra.

In recent years, many works have been devoted to developing
efficient preconditioners for the generalized saddle point problem, such as block diagonal and triangular preconditioners
(\cite{O.2015}-\cite{Y.C.J.2010}), matrix splitting
preconditioners (\cite{G.Z.2016}-\cite{L.L.G.2019}).
Among the preconditioners, HSS preconditioner is an efficient
method to solve the generalized saddle point problem, originally
developed by Bai et al.~\cite{G.B.2003}.
As an improvement, we present an MHSS block preconditioner for the linear system \eqref{1.15}.
Using the similar constructing technique as \cite{L.L.G.2019}, for any
constant $\alpha>0$, MHSS block preconditioner
$\mathcal{P}_{MHSS}$ is defined by
\begin{align}
	\mathcal{P}_{MHSS}=\left(
	\begin{array}{cc}
		\dfrac{1}{\alpha}A& 0 \\
		0 &I\\
	\end{array}
	\right)\left(
	\begin{array}{cc}
		\alpha I& -M \\
		M & B
	\end{array}
	\right).\label{1.16}
\end{align}
Let
\begin{align*}
	\mathcal{R}=\mathcal{P}_{MHSS}-\tilde{\mathcal{A}}=\left(
	\begin{array}{cc}
		0& M-\dfrac{1}{\alpha}AM \\
		0 & 0\\
	\end{array}
	\right),
\end{align*}
then
\begin{align*}
	\tilde{\mathcal{A}}=\mathcal{P}_{MHSS}-\mathcal{R}.
\end{align*}
	MHSS preconditioner is much closer to the coefficient matrix $\tilde{\mathcal{A}}$ than HSS preconditioner and outperforms HSS preconditioner in accelerating the convergence of GMRES \cite{Cao2016simplified}.
Decomposing $\mathcal{P}_{MHSS}$ as
\begin{equation*}
	\begin{split}
		\mathcal{P}_{MHSS}=\dfrac{1}{\alpha}\left(\begin{array}{cc}
			A& 0 \\
			0 & \alpha I\\
		\end{array}\right)\left(\begin{array}{cc}
			I& 0 \\
			\dfrac{1}{\alpha} M & I\\
		\end{array}\right)\left(\begin{array}{cc}
			\alpha I& 0 \\
			0 & B+\dfrac{1}{\alpha}M^2\\
		\end{array}\right)\left(\begin{array}{cc}
			I& -\dfrac{1}{\alpha} M \\
			0 & I\\
		\end{array}\right),
	\end{split}
\end{equation*}
then we obtain
\begin{align}
	\mathcal{T}(\alpha)
	&=\mathcal{P}_{MHSS}^{-1}\mathcal{R}\notag\\
	&=\begin{pmatrix}
		0& A^{-1}M-\frac{1}{\alpha}M+\frac{1}{\alpha}M(B+\frac{1}{\alpha}M^2)^{-1}(\frac{1}{\alpha}M^2-MA^{-1}M) \\
		0 & (B+\frac{1}{\alpha}M^2)^{-1}(\frac{1}{\alpha}M^2-MA^{-1}M)
	\end{pmatrix} \notag \\
	&=\begin{pmatrix}
		0& A^{-1}M-\frac{1}{\alpha}M+\frac{1}{\alpha}MD^{-1}F \\
		0 & D^{-1}F
	\end{pmatrix},\label{1.171}
\end{align}
where
\begin{align}
	D=B+\frac{1}{\alpha}M^2, ~~F=\frac{1}{\alpha}M^2-MA^{-1}M.\label{1.5}
\end{align}
It follows that
\begin{equation}
	\begin{split}
		\mathcal{P}_{MHSS}^{-1}\tilde{\mathcal{A}}=\mathcal{P}_{MHSS}^{-1}(\mathcal{P}_{MHSS}-\mathcal{R})&=I-\mathcal{P}_{MHSS}^{-1}\mathcal{R}\\
		&=I-\mathcal{T}(\alpha)\\
		&=\left(\begin{array}{cc}
			I & -A^{-1}M+\frac{1}{\alpha}M-\frac{1}{\alpha}MD^{-1}F \\
			0 & I-D^{-1}F\\
		\end{array}\right).
	\end{split}\label{1.18}
\end{equation}

Now, we analyze the spectral distribution for the
preconditioned matrix $\mathcal{P}_{MHSS}^{-1}\tilde{\mathcal{A}}$.

\begin{theorem}\label{th3}
	Let $A\in \mathbb{R}^{\kappa\times \kappa}$,
	$M\in \mathbb{R}^{\kappa\times \kappa}$ and $M\succ 0$,
	$B\in \mathbb{R}^{\kappa\times \kappa}$ and $B\succeq 0$,
	$\alpha$ be a positive constant. Then 
	$\mathcal{P}_{MHSS}^{-1}\tilde{\mathcal{A}}$ has at least $\kappa$ eigenvalues $1$.
	The remaining eigenvalues $\lambda$ are real and located in the positive interval
	\begin{align}
		\frac{\alpha c_{\kappa}^2}{\theta_1(c_1^2+\alpha\sigma_1)}
		\leq \lambda \leq \frac{\alpha (\sigma_1\theta_{\kappa}+c_1^2)}
		{\theta_{\kappa} c_{\kappa}^2},\label{1.19}
	\end{align}
where $c_{\kappa}$, $\theta_{\kappa}$ and
	$c_1$, $\theta_1$ are the minimum and maximum eigenvalues of $M,~A$, respectively. $\sigma_1$ is the maximum eigenvalue of $B$.
	Furthermore, if $A=\alpha I$, then all the eigenvalues of $\mathcal{P}_{MHSS}^{-1}\tilde{\mathcal{A}}$ are $1$.
\end{theorem}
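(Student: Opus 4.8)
The plan is to exploit the block upper-triangular form of $\mathcal{P}_{MHSS}^{-1}\tilde{\mathcal{A}}$ already derived in \cref{1.18}. Since that matrix is block triangular with diagonal blocks $I$ (of size $p$) and $I - D^{-1}F$, its spectrum is the union of the eigenvalue $1$ with multiplicity at least $p$ and the eigenvalues of the $p\times p$ matrix $I - D^{-1}F$ (some of the latter may again equal $1$, which is why the count is ``at least'' $p$). So everything reduces to locating the spectrum of $I-D^{-1}F$.

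First I would simplify it: $I - D^{-1}F = D^{-1}(D-F)$, and from the definitions in \cref{1.5},
\begin{align*}
D - F = B + \tfrac{1}{\alpha}M^2 - \big(\tfrac{1}{\alpha}M^2 - MA^{-1}M\big) = B + MA^{-1}M =: G.
\end{align*}
Thus the remaining eigenvalues are those of $D^{-1}G$. Since $M>0$ and $A>0$ we have $MA^{-1}M>0$, and with $B\geq 0$ this gives $G>0$; likewise $D = B + \tfrac{1}{\alpha}M^2 > 0$. Hence $D^{-1}G$ is similar to the symmetric positive definite matrix $D^{-1/2}GD^{-1/2}$, so its eigenvalues are real and positive, and each such eigenvalue $\lambda$ equals a generalized Rayleigh quotient
\begin{align*}
\lambda = \frac{x^*Gx}{x^*Dx} = \frac{x^*Bx + x^*MA^{-1}Mx}{x^*Bx + \frac{1}{\alpha}x^*M^2x}
\end{align*}
for some $x\neq 0$. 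This already establishes that the non-unit eigenvalues are real and positive.

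For the two-sided bound \cref{1.19} I would estimate numerator and denominator separately using the eigenvalue intervals $c_p^2\, x^*x \leq x^*M^2x \leq c_1^2\, x^*x$, $0 \leq x^*Bx \leq \sigma_1\, x^*x$, and (writing $y=Mx$) $\tfrac{1}{\theta_1}x^*M^2x \leq x^*MA^{-1}Mx \leq \tfrac{1}{\theta_p}x^*M^2x$, which combined give $\tfrac{c_p^2}{\theta_1}x^*x \leq x^*MA^{-1}Mx \leq \tfrac{c_1^2}{\theta_p}x^*x$. For the upper bound, keeping only the $\tfrac{1}{\alpha}M^2$ term in the denominator so that $x^*Dx \geq \tfrac{c_p^2}{\alpha}x^*x$ and bounding the numerator by $\big(\sigma_1 + \tfrac{c_1^2}{\theta_p}\big)x^*x$ yields $\lambda \leq \frac{\alpha(\sigma_1\theta_p + c_1^2)}{\theta_p c_p^2}$. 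For the lower bound, discarding $x^*Bx\geq 0$ in the numerator so that $x^*Gx \geq \tfrac{c_p^2}{\theta_1}x^*x$ and bounding the denominator by $\big(\sigma_1 + \tfrac{c_1^2}{\alpha}\big)x^*x$ yields $\lambda \geq \frac{\alpha c_p^2}{\theta_1(c_1^2 + \alpha\sigma_1)}$. Finally, for the case $A=\alpha I$ I would simply observe that then $MA^{-1}M = \tfrac{1}{\alpha}M^2$, hence $F=0$ and, reading off \cref{1.171}, $\mathcal{T}(\alpha)=0$; therefore $\mathcal{P}_{MHSS}^{-1}\tilde{\mathcal{A}} = I$ and every eigenvalue equals $1$.

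The computation is essentially routine; the only point requiring a little care is the choice of which term to retain and which to drop in the numerator and denominator of the Rayleigh quotient so that these deliberately crude estimates collapse to exactly the stated constants (keeping $B$ on one side while dropping it on the other), together with noting that the reality of the spectrum comes from the similarity of $D^{-1}G$ to a symmetric positive definite matrix rather than from $D^{-1}G$ being symmetric itself, which it is not in general.
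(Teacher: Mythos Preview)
Your proof is correct and follows essentially the same route as the paper: reduce to the block $I-D^{-1}F=D^{-1}(B+MA^{-1}M)$, express its eigenvalues as generalized Rayleigh quotients, and bound numerator and denominator separately using the extremal eigenvalues of $A$, $B$, $M$. Your treatment is in fact slightly more careful than the paper's, since you make explicit why the eigenvalues are real (via similarity to the SPD matrix $D^{-1/2}GD^{-1/2}$) and you actually verify the $A=\alpha I$ clause, which the paper's proof leaves unaddressed.
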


\begin{proof}
From \eqref{1.18}, it is evident that $\mathcal{P}_{MHSS}^{-1}\tilde{\mathcal{A}}$
	has at least $\kappa$ eigenvalues $1$. The remaining are the eigenvalues of $I-D^{-1}F$.
	Let
	\begin{align*}
		I-D^{-1}F=D^{-1}(D-F)=(B+\frac{1}{\alpha}M^2)^{-1}(B+MA^{-1}M),
	\end{align*}
	$\lambda$ and $u$ be the eigenvalue and corresponding eigenvector of $I-D^{-1}F$, thus
	\begin{align}
		(B+MA^{-1}M)u=\lambda~(B+\frac{1}{\alpha}M^2)u.\label{1.20}
	\end{align}
	Premultiplying both sides of \eqref{1.20} by $u^{\ast}/(u^{\ast}u)$ gets
	\begin{align*}
		\frac{u^{\ast}(B+MA^{-1}M)u}{u^{\ast}u}=\lambda\frac{u^{\ast}(B+\frac{1}{\alpha}M^2)u}{u^{\ast}u}.
	\end{align*}
	Define
	\begin{align}
		d=\frac{u^{\ast}MA^{-1}Mu}{u^{\ast}u},
		~\gamma=\frac{u^{\ast}Bu}{u^{\ast}u},
		~\mbox{and}~\tau=\frac{u^{\ast}M^2u}{u^{\ast}u},\label{2.20}
	\end{align}
	then
	\begin{align}
		\lambda=\frac{{\alpha}(\gamma+d)}{{\alpha}\gamma+\tau}.\label{1.21}
	\end{align}
	Note that $A\succ 0,~M\succ 0$ and $B\succeq 0$, then
	\begin{align}
		0<\frac{c_{\kappa}^2}{\theta_1}\leq d \leq \frac{c_1^2}{\theta_{\kappa}},
		~0<c_{\kappa}^2 \leq \tau \leq c_{1}^2,
		~\mbox{and}~0\leq \gamma \leq \sigma_1.\label{1.22}
	\end{align}
	Combining \eqref{1.21} with \eqref{1.22}, we obtain \eqref{1.19}.
\end{proof}

\begin{remark}\label{re4}
Note that the positive interval presented in Theorem \ref{th3} is not very tight. 
	Therefore, choosing appropriate $\alpha$ could obtain a better spectral distribution. 
	This is illustrated by a simple numerical experiment in Fig. \ref{FIG:8}.
	\begin{figure}[htb]
		\centering
		\label{fig:v}
		\includegraphics[width=2.4in]{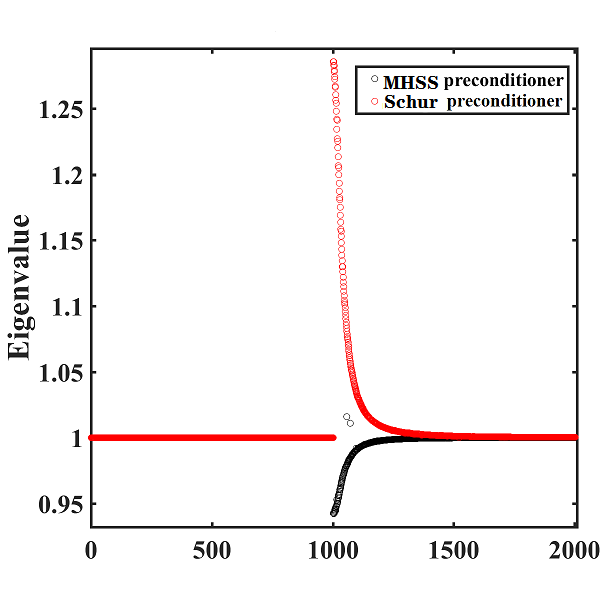}
		\caption{The spectral distribution of the preconditioned linear system with $\kappa=1000,~\epsilon=0.01,~dt=0.001,~\sigma=10,~ n=1,~k=1$, $\alpha=0.001$.}\label{FIG:8}
	\end{figure}
\end{remark}

In the following, we present an approach to get $\alpha$ such that $\rho(\mathcal{T}(\alpha))$ is as small as possible. However,
it is difficult to compute $\rho(\mathcal{T}(\alpha))$ exactly.
Instead, we give an upper bound of $\rho(\mathcal{T}(\alpha))$.

\begin{theorem}\label{th:min}
	Let $A\succ 0, ~ M\succ 0\in \mathbb{R}^{\kappa\times \kappa}$,
	$B\succeq 0\in \mathbb{R}^{\kappa\times \kappa}$, and
	$\mathcal{T}(\alpha)$ is defined in \eqref{1.171}. Then, we get
	\begin{align*}
		\rho(\mathcal{T}(\alpha))\leq\max_{1\leq i\leq \kappa}
		\Big\vert 1-\frac{\alpha}{\lambda_i(A)} \Big\vert =\tilde{\sigma}(\alpha).
	\end{align*}
\end{theorem}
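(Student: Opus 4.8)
The plan is to bound the spectral radius of $\mathcal{T}(\alpha)$ by exploiting the block upper-triangular structure of $\mathcal{T}(\alpha)$ in \cref{1.171}. Since $\mathcal{T}(\alpha)$ has a zero $(1,1)$ block and a zero $(2,1)$ block, its nonzero eigenvalues coincide with those of the $(2,2)$ block $D^{-1}F$, together with the eigenvalue $0$ coming from the first block row. Hence $\rho(\mathcal{T}(\alpha)) = \rho(D^{-1}F)$, where $D = B + \frac{1}{\alpha}M^2$ and $F = \frac{1}{\alpha}M^2 - MA^{-1}M$ as in \cref{1.5}.

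Next I would relate $\rho(D^{-1}F)$ to the eigenvalues of $A$. Write $D^{-1}F = (B + \frac{1}{\alpha}M^2)^{-1}(\frac{1}{\alpha}M^2 - MA^{-1}M)$. The key algebraic observation is that $\frac{1}{\alpha}M^2 - MA^{-1}M = M(\frac{1}{\alpha}I - A^{-1})M = \frac{1}{\alpha}M A^{-1}(A - \alpha I)M$, so $F = \frac{1}{\alpha} M A^{-1}(A-\alpha I) M$. For any eigenvalue $\lambda$ of $D^{-1}F$ with eigenvector $u$, one has $Fu = \lambda D u$, i.e. $\frac{1}{\alpha}MA^{-1}(A-\alpha I)Mu = \lambda(B + \frac{1}{\alpha}M^2)u$. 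Premultiplying by $u^{\ast}$ and using $B \geq 0$ to control the denominator, I would extract the estimate $|\lambda| \le \|\frac{1}{\alpha}MA^{-1}(A-\alpha I)M\| / \|\frac{1}{\alpha}M^2\|$ after a similarity transformation; more cleanly, I expect the cleanest route is to conjugate by $M$: since $M > 0$, $D^{-1}F$ is similar to $M^{-1}D^{-1}F M = (M^{-1}BM^{-1} + \frac{1}{\alpha}I)^{-1}(\frac{1}{\alpha}I - A^{-1})$ after grouping, and when $B=0$ this reduces exactly to $\alpha(\frac{1}{\alpha}I - A^{-1}) = I - \alpha A^{-1}$, whose spectral radius is $\max_i|1 - \alpha/\lambda_i(A)|$. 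The presence of $B \geq 0$ only helps: it enlarges the symmetric positive definite matrix being inverted, which I would argue shrinks (or at least does not enlarge) the relevant quotient, giving $\rho(D^{-1}F) \le \max_i |1 - \alpha/\lambda_i(A)| = \tilde{\sigma}(\alpha)$.

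The main obstacle is making the last monotonicity step rigorous when $B \ne 0$, because $D^{-1}F$ is generally not symmetric and $F$ is not sign-definite, so one cannot simply invoke a Rayleigh-quotient or Loewner-order comparison directly on $D^{-1}F$. I would handle this by passing to a symmetrized formulation: since $M>0$ and $A>0$, introduce $\hat{A} = M^{-1}AM^{-1} > 0$ and $\hat{B} = M^{-1}BM^{-1} \ge 0$, so that $D^{-1}F$ is similar to $(\hat{B} + \frac{1}{\alpha}I)^{-1}(\frac{1}{\alpha}I - \hat{A}^{-1})$. Then for an eigenvalue $\lambda$ with eigenvector $v$, $\lambda = \frac{v^\ast(\frac{1}{\alpha}I - \hat A^{-1})v}{v^\ast(\hat B + \frac{1}{\alpha}I)v}$ when $(\frac{1}{\alpha}I - \hat A^{-1})$ and $(\hat B + \frac1\alpha I)$ are simultaneously handled — though they need not commute, one still has $|\lambda| \le \frac{\|(\frac{1}{\alpha}I - \hat A^{-1})v\|}{\|(\hat B + \frac1\alpha I)v\|} \le \frac{\|\frac{1}{\alpha}I - \hat A^{-1}\|_2}{\lambda_{\min}(\hat B + \frac1\alpha I)} \le \alpha\,\|\tfrac1\alpha I - \hat A^{-1}\|_2$, using $\lambda_{\min}(\hat B + \frac1\alpha I) \ge \frac1\alpha$. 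Finally $\alpha\|\frac1\alpha I - \hat A^{-1}\|_2 = \|I - \alpha \hat A^{-1}\|_2 = \max_i|1 - \alpha/\lambda_i(\hat A)| = \max_i |1-\alpha/\lambda_i(A)|$, since $\hat A = M^{-1}AM^{-1}$ is similar to $A M^{-2}$... here care is needed: $\hat A$ is congruent, not similar, to $A$, so its eigenvalues differ from those of $A$ in general. I would therefore instead use the non-symmetrized chain directly on $D^{-1}F$: $\rho(D^{-1}F) \le \|D^{-1}F\|$ is too lossy, so the correct bookkeeping is $\rho(D^{-1}F) = \rho(F D^{-1})$ and to write $FD^{-1} = \frac1\alpha M A^{-1}(A - \alpha I) M D^{-1}$; bounding this requires $\|M D^{-1} \cdot \frac1\alpha M\|$-type estimates with $D \ge \frac1\alpha M^2$ giving $D^{-1} \le \alpha M^{-2}$, whence $\frac1\alpha M D^{-1} M \le I$ in the Loewner order, and combined with the factor $A^{-1}(A-\alpha I) = I - \alpha A^{-1}$ one concludes $\rho(\mathcal T(\alpha)) \le \|I - \alpha A^{-1}\|_2 = \max_{1\le i\le p}|1 - \alpha/\lambda_i(A)| = \tilde\sigma(\alpha)$. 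Pinning down precisely which of these equivalent manipulations yields a fully valid inequality — in particular justifying that the $B$-term genuinely does not worsen the bound despite non-commutativity — is the crux of the argument.
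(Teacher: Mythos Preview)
Your second approach --- conjugating $D^{-1}F$ by $M$ to obtain the product $(\tfrac{1}{\alpha}I + M^{-1}BM^{-1})^{-1}(\tfrac{1}{\alpha}I - A^{-1})$ --- is exactly the route the paper takes, and it is the one that finishes the proof in one line. (Your conjugation direction is reversed: the correct identity is $M\,D^{-1}F\,M^{-1} = \tilde D^{-1}\tilde F$ with $\tilde D = \tfrac{1}{\alpha}I + M^{-1}BM^{-1}$ and $\tilde F = \tfrac{1}{\alpha}I - A^{-1}$, but this is harmless for the spectrum.) The step you are missing is simply to stop here and apply the submultiplicative bound
\[
\rho(\tilde D^{-1}\tilde F)\;\le\;\|\tilde D^{-1}\tilde F\|_2\;\le\;\|\tilde D^{-1}\|_2\,\|\tilde F\|_2 .
\]
Both factors are \emph{symmetric}: $\tilde D$ is symmetric positive definite with every eigenvalue at least $\tfrac{1}{\alpha}$ (since $M^{-1}BM^{-1}\ge 0$), so $\|\tilde D^{-1}\|_2\le\alpha$; and $\tilde F$ is symmetric with $\|\tilde F\|_2 = \max_i|\tfrac{1}{\alpha}-\lambda_i(A^{-1})|$. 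Multiplying gives $\rho(\mathcal T(\alpha))\le\alpha\max_i|\tfrac{1}{\alpha}-\tfrac{1}{\lambda_i(A)}|=\tilde\sigma(\alpha)$ directly, with no monotonicity argument needed.

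Your later detours are where the genuine gap lies. In the third approach you replace $A^{-1}$ by $\hat A^{-1}=MA^{-1}M$; as you yourself note, $\hat A$ is congruent, not similar, to $A$, so its eigenvalues are not those of $A$ and the identification with $\tilde\sigma(\alpha)$ fails. In the fourth approach the Loewner inequality $D^{-1}\le\alpha M^{-2}$ does not by itself bound the spectral radius of a product with the indefinite factor $I-\alpha A^{-1}$, because Loewner order is not preserved under multiplication by non-PSD matrices. None of this is needed: the similarity $D^{-1}F\sim\tilde D^{-1}\tilde F$ already isolates $A^{-1}$ (with its true eigenvalues) in its own symmetric factor, and the crude norm product handles the non-commutativity you were worried about.
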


\begin{proof}
	By the definition of $\mathcal{T}(\alpha)$ in \eqref{1.171}, it follows that
	\begin{align*}
		\rho(\mathcal{T}(\alpha))=\rho(D^{-1}F),
	\end{align*}
	where $D$ and $F$ are defined in \eqref{1.5}.
	Note that
	$$D^{-1}F=M^{-1}\tilde{D}^{-1}\tilde{F}M,$$
	where
	\begin{align*}
		\tilde{D}=\frac{1}{\alpha}I+M^{-1}BM^{-1}, ~~\tilde{F}=\frac{1}{\alpha}I-A^{-1}.
	\end{align*}
Therefore
	\begin{align*}
		\rho(\mathcal{T}(\alpha))=\rho(D^{-1}F)
		=\rho(\tilde{D}^{-1}\tilde{F})\leq \Vert \tilde{D}^{-1}\tilde{F}\Vert_2\leq \Vert\tilde{D}^{-1}\Vert_2 \Vert\tilde{F}\Vert_2.
	\end{align*}
Since $A\succ 0$, $M\succ0$ and $B \succeq 0$, we have
	\begin{align*}
		\Vert \tilde{D}^{-1}\Vert_2=\Big\Vert(\frac{1}{\alpha}I+M^{-1}BM^{-1})^{-1} \Big\Vert_2
		&=\max_{1\leq i\leq \kappa} \Big\vert \lambda_i[(\frac{1}{\alpha}I+M^{-1}BM^{-1})^{-1}] \Big\vert\\
		&=\max_{1\leq i\leq \kappa} \vert\frac{1}{\frac{1}{\alpha}+\lambda_i(M^{-1}BM^{-1})} \vert
		=\alpha,
	\end{align*}
	and
	\begin{align*}
		\Vert\tilde{F}\Vert_2= \Big\Vert\frac{1}{\alpha}I-A^{-1} \Big\Vert_2
		&=\max_{1\leq i\leq \kappa} \Big\vert\lambda_i(\frac{1}{\alpha}I-A^{-1}) \Big\vert\\
		&=\max_{1\leq i\leq \kappa} \Big\vert\frac{1}{\alpha}-\lambda_i(A^{-1}) \Big\vert.
	\end{align*}
	Hence
	\begin{align*}
		\rho(\mathcal{T}(\alpha))=\rho(D^{-1}F)\leq \alpha \max_{1\leq i\leq \kappa} \Big\vert\frac{1}{\alpha}-\lambda_i(A^{-1}) \Big\vert
		=\max_{1\leq i\leq \kappa} \Big\vert1-\frac{\alpha}{\lambda_i(A)} \Big\vert.
	\end{align*}
\end{proof}

\begin{remark}
	By Theorem \ref{th:min}, we can choose $\alpha$ as the optimal $\alpha_{*}$ to minimize $\tilde{\sigma}(\alpha)$
	\begin{align*}
		\alpha=\alpha_{*}=\mbox{arg}~ \min_{\alpha} \tilde{\sigma}(\alpha)
		=\frac{2\lambda_1(A)\lambda_{\kappa}(A)}{\lambda_1(A)+\lambda_{\kappa}(A)}.
	\end{align*}
	Since $\alpha_*$ is related to the eigenvalues of $A$, it is
	expensive to compute the optimal parameter $\alpha_*$ for
	large scale matrix $A$. Therefore, in practical implementation, we need to make an approximation for $\alpha_*$, refer to \cite{H.2014}.
	The practical choice strategy
	for parameter $\alpha$ is
	\begin{align*}
		\alpha=\frac{\mbox{trace}(A)}{\kappa} ~~\mbox{or}~~ \alpha=\frac{\mbox{trace}(M^4)}{\mbox{trace}(M^4A^{-1})}.
	\end{align*}
\end{remark}

\section{Numerical results}
\label{sec:rslts}

In this section, we offer several examples to support the theoretical analysis.
The whole process is performed on a computer with Intel Core 3.20GHz CPU, 4.00GB
RAM and MATLAB R2017a. 
The piecewise linear functions are used as the basis functions.
GMRES is adopted to solve the preconditioned linear system \eqref{eq:discretedmatrix} in each iteration. The elapsed CPU time per nonlinear iteration process in seconds is
denoted by `CPU' and the average of GMRES iteration per time step by IT$_{GM}$.
Let $k_n$ be the iteration steps of the $n$-th nonlinear iteration, and
IT$_{tol}$ be the total iteration steps after $n$-th nonlinear iteration, i.e.,
$\mbox{IT}_{tol}=\sum_{i=1}^{n} k_i$. The nonlinear iteration error is defined
as $\varepsilon = \Vert u^{n,k}-u^{n,k-1}\Vert_2$. The stop criteria $\varepsilon$
is chosen as $10^{-6}$.

First, we demonstrate the performance of Newton method and Picard method by solving the linear system \eqref{eq:discretedmatrix} directly.
When the total number of time steps is $N=100$, Tab.\,\ref{tab:1} compares
these two methods against CPU and $\mbox{IT}_{tol}$ in one and two
dimensional space with different degrees of freedom (DOF).
Newton method obviously requires less CPU and $\mbox{IT}_{tol}$ than Picard method.
\begin{table}[!hptb]
	\centering
	\caption{A comparison of two Netwon methods with different DOF
		for domains $\Omega=(0,1)$ and $\Omega=(0,\pi)^2$.}
	\label{tab:1}
	\renewcommand\arraystretch{1.2}
	\setlength{\tabcolsep}{1.2mm}
	{\begin{tabular}{cccc|cccc}\hline
			$\{\Omega,~~ \epsilon\}$&         &$\{(0,1), ~~0.01\}$		&   &        & $\{(0,\pi)^2, ~~0.001\}$ &      &     \\ \hline
			DOF                      & Index & Newton     &Picard         &DOF & Index              & Newton  &Picard  \\ \hline
			\multirow{2}{*}{$10^4$} &CPU(s)      & 15.56  &57.12   &\multirow{2}{*}{$10^4$}&CPU  & 52.54      &84.76    \\
			&IT$_{tol}$  &346     &1359     &        &IT$_{tol}$          &300        & 486 \\ \cline{1-8}
			\multirow{2}{*}{$4\times 10^5$}	&CPU &683.30 &2371.97 &\multirow{2}{*}{$4\times10^4$} &CPU   &301.78 &750.69  \\
			&IT$_{tol}$ &327    &1183        &           &IT$_{tol}$        &300             &753\\ \hline
	\end{tabular}}
\end{table}

Furthermore, Fig.\,\ref{fig:1} gives the concrete iteration process when $\Omega=(0,1)$, DOF$=10^{4}$, $\sigma=100$, $\Delta t =
\epsilon^2$ and $\epsilon=0.01$.
Note that CPU time required for each nonlinear iteration
step of these two methods is almost the same.
Fig.\,\ref{fig:subfig:b} represents that Newton method
requires fewer nonlinear iteration steps than that of Picard method for each time step $n$ and CPU time. When $n=23$, Fig.\,\ref{fig:subfig:a} illustrates that
$\varepsilon$ of Newton method decreases faster than
that of Picard method. Obviously, Newton method converges quadratically and Picard method converges linearly, as the theory in Sec.\,\ref{sec:CA} predicted.

\begin{figure}[!htbp]
	\centering
	\subfigure[CPU time against iterations]{
		\label{fig:subfig:b}
		\includegraphics[width=2in]{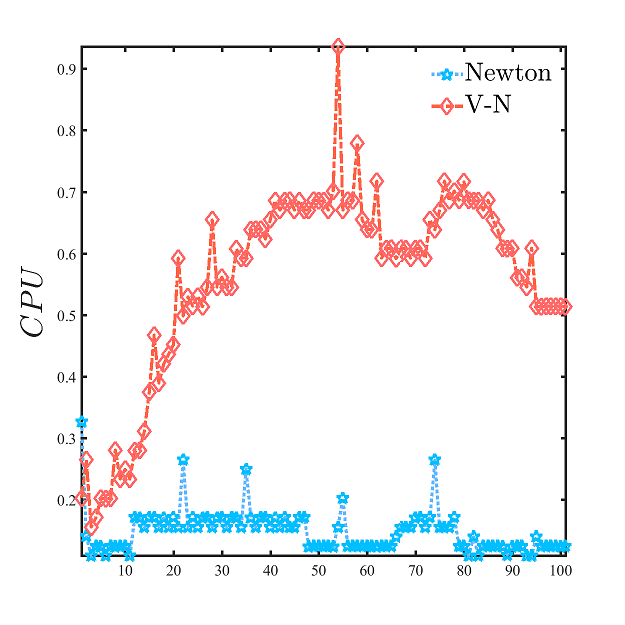}
	}
	\hspace{0.15in}
	\subfigure[Error tendency of the $23rd$ time step]{
		\label{fig:subfig:a}
		\includegraphics[width=2in]{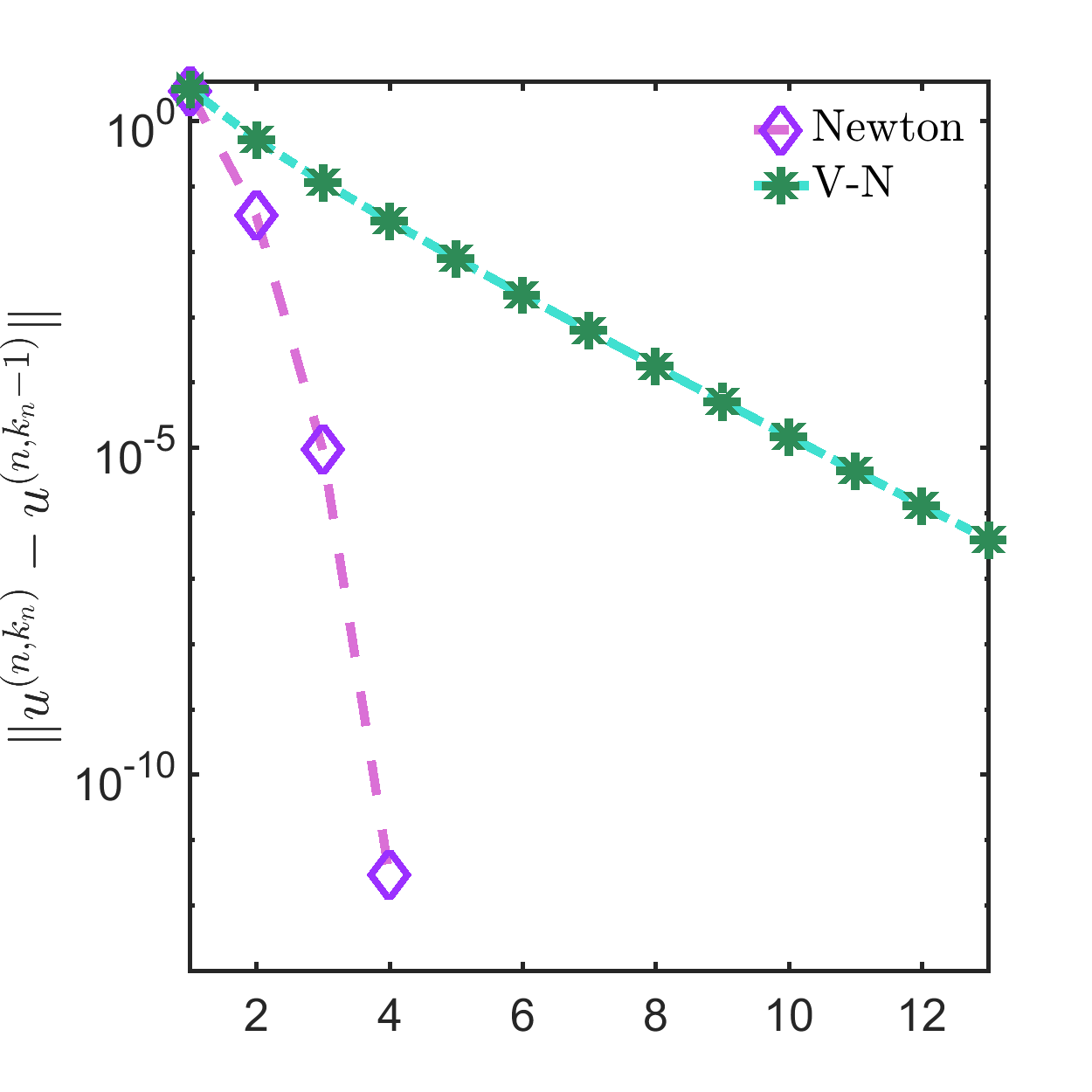}
	}
	\caption{Comparison of convergence speed between Newton method and Picard method with $\Omega=(0,1)$ and DOF$=10^4$.}\label{fig:1}
\end{figure}

Second, we demonstrate the efficiency of proposed preconditioners when
solving linear system \eqref{eq:discretedmatrix}.
As a one-dimensional example, Table \ref{tab:2} presents the performance of solving
non-preconditioned and preconditioned systems
in $500$ time steps ($\Delta t=\epsilon^2$) with different spatial DOF
for domains $\Omega=(0,1)$.
From these results, it can be seen that two preconditioners
accelerate computation, almost three times faster than solving the non-preconditioned system in terms of CPU time.

\begin{table}[htb]
	\centering
	\caption{A comparison of computing non-preconditioned and preconditioned systems
		in $500$ time steps ($\Delta t=\epsilon^2$) with different spatial DOF and $\Omega=(0,1)$.}\label{tab:2}
	\renewcommand\arraystretch{1.2}
	\setlength{\tabcolsep}{4mm}{
		\begin{tabular}{|c|ccccc|}\hline			
			$DOF$ 
			&Index          &($\epsilon$, $\sigma$)   &IT$_{GM}$        & IT$_{tol}$          &CPU (s)\\ \hline
			\multirow{6}*{$10^4$ }	&Newton               &                         &  -           &              &263.15 \\
			&Newton-$\mathcal{P}_{Schur}$        &  (0.001,100)             &5.5          &1500         &101.97\\
			&Newton-$\mathcal{P}_{MHSS}$        &                          &14.5          &             &107.96\\ \cline{2-6}
			&Picard                  &                          &  -          &            &422.81 \\
			&Picard-$\mathcal{P}_{Schur}$            &   (0.001,100)            &5.0          &2349           &156.65\\
			&Picard-$\mathcal{P}_{MHSS}$             &                          &7.5          &              &165.27\\ \hline
			\multirow{6}*{$4\times 10^4$}	&Newton             &                          &  -         &             &1539.59 \\
			&Newton-$\mathcal{P}_{Schur}$            &  (0.001,100)             &7.0        &1500         &592.27\\
			&Newton-$\mathcal{P}_{MHSS}$          &                          &8.4         &              &580.30\\ \cline{2-6}
			&Picard           &                          &-           &              &3553.33 \\
			&Picard-$\mathcal{P}_{Schur}$              &   (0.001,100)            &6.5         &3291             &1271.09\\
			&Picard-$\mathcal{P}_{MHSS}$            &                          &16.9         &             &1267.39\\ \hline
	\end{tabular}}\\
\end{table}

Last, we observe the long-time behavior of the proposed method in the domain
$\Omega=(0,\pi)^2$ with $1000\times 1000$ triangular grids.
Using random initial values, Figs. \ref{fig:5} and \ref{fig:6} show the
coarsening dynamic processes with different parameters.
The final converged morphologies are lamellar and cylindrical phases, respectively.
As shown in these figures, our algorithm keeps energy dissipation as time evolves.

\begin{figure}[htb]
	\centering
	\includegraphics[width=0.75\textwidth]{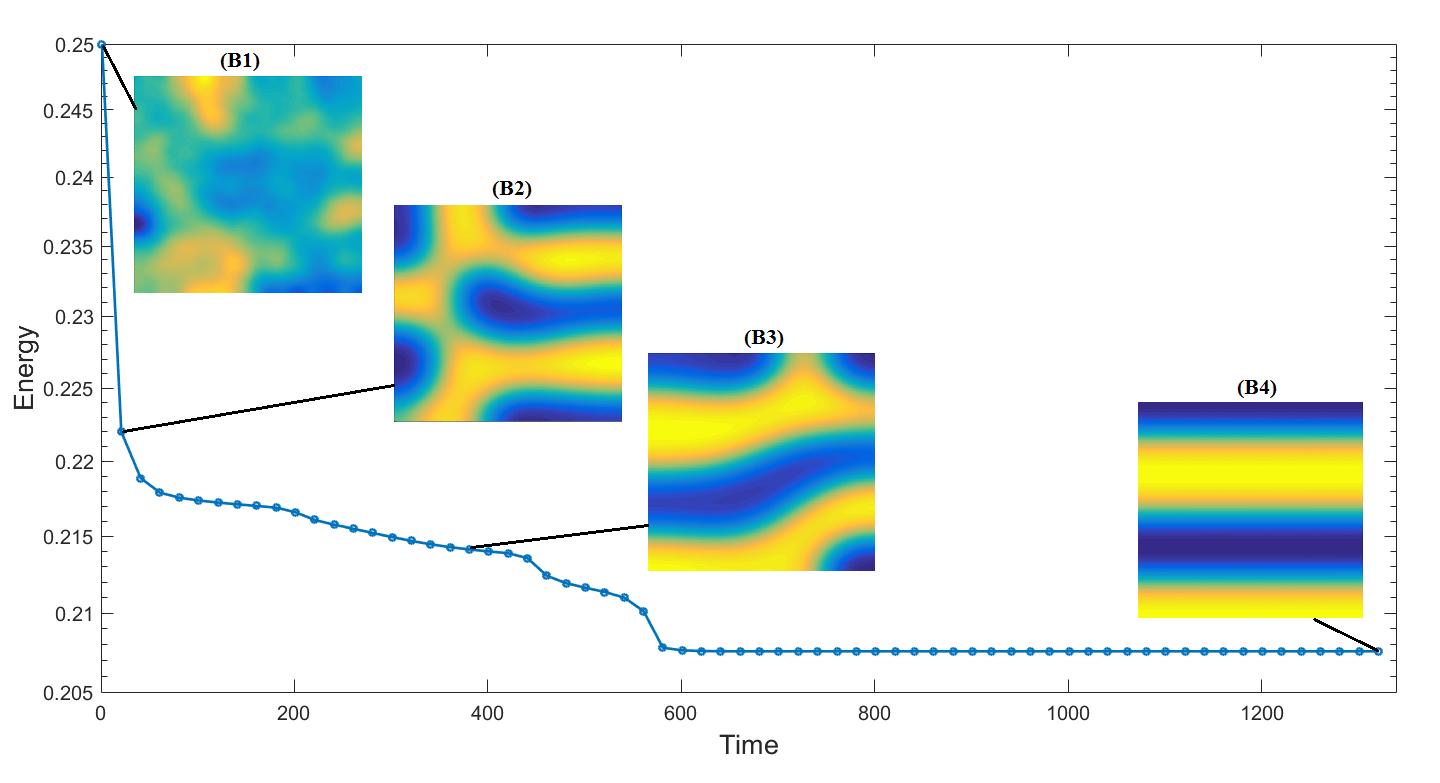}
	\caption{The dynamic process in domain $(0,\pi)^2$ when
		$\epsilon=0.08$, $\sigma=10$ , $m=0$ and $\Delta t=0.01$ with a random initial value.
		(B1), (B2), (B3) and (B4) correspond to $t=0, 0.2, 4, 13.43$.}
	\label{fig:5}
\end{figure}

\begin{figure}[htb]
	\centering
	\includegraphics[width=0.75\textwidth]{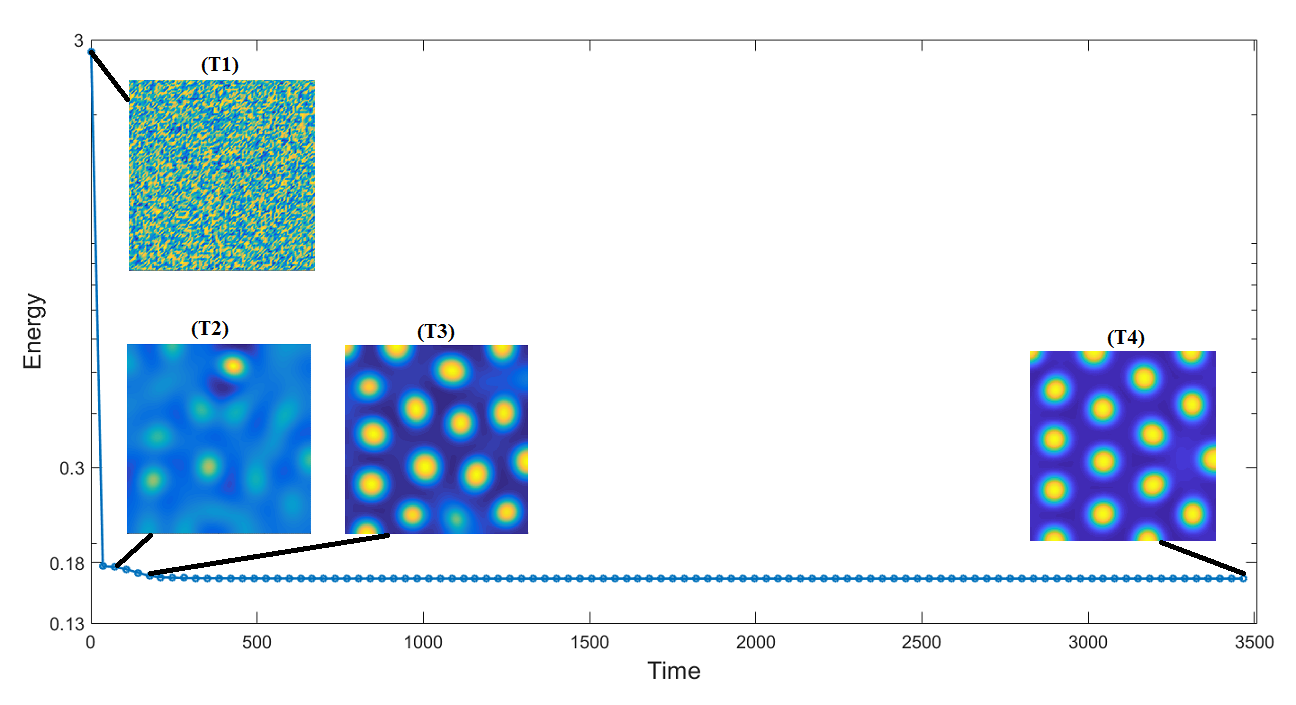}
	\label{fig:6}
	\caption{The dynamic process in domain $(0,\pi)^2$ when
		$\epsilon=0.08$, $\sigma=10$ , $m=0.4$ and $\Delta t=0.0064$ with a random initial value.
		(T1), (T2), (T3) and (T4) correspond to $t=0, 0.448, 1.344, 22.4$.}
\end{figure}

\section{Conclusion}
\label{sec:conclusion}

This paper presents a systematic numerical method to solve the mass-conserved
Ohta-Kawasaki equation. An unconditionally energy stable scheme, the CSS, is used to
discretize the time variable, while FEM is used for the spatial variable.
Newton method and Picard method are applied to update the implicit nonlinear terms.
To reduce the condition number of the
discretized linear system, we propose Schur complement preconditioner and MHSS
block preconditioner. The convergent rate of Newton method is proved.
The spectral distribution of two block preconditioning linear systems has been analyzed.
Numerical investigations provided sufficient support for the theoretical
analysis and demonstrated the efficiency of the proposed numerical methods.





\section*{Appendix}

\begin{appendices}
	
\section{Proof of Theorem \ref{thm:dissp}}\label{secA1}
	
	\begin{proof}
		Using the definition of $E(u)$ in \eqref{eq:okenergy}, then
		\begin{align}
			E(u^{n})-E(u^{n-1})
			&=\frac{1}{2}\epsilon^2(\Vert \nabla u^n\Vert ^2-\Vert \nabla u^{n-1}\Vert ^2)+(\Phi(u^n)-\Phi(u^{n-1}),1)\nonumber\\
			&~~+\frac{1}{2}\sigma(\Vert (-\Delta)^{-\frac{1}{2}}(u^n-m)\Vert ^2-\Vert 
			(-\Delta)^{-\frac{1}{2}}(u^{n-1}-m)\Vert ^2).
			\label{dissp:eq1}
		\end{align}
		Using Taylor expansion, we have
		\begin{align*}
			&\Phi_{+}(u^{n-1})-\Phi_{+}(u^n)=\Phi'_{+}(u^n)(u^{n-1}-u^n)+\frac{\Phi_{+}^{''}(\xi^{n-1})}{2}(u^{n-1}-u^n)^2,\\
			&\Phi_{-}(u^{n})-\Phi_{-}(u^{n-1})=\Phi'_{-}(u^{n-1})(u^{n}-u^{n-1})+\frac{\Phi_{-}^{''}(\eta^{n-1})}{2}(u^{n}-u^{n-1})^2.
		\end{align*}
		Thus,
		\begin{align}
			\nonumber\Phi(u^n)-\Phi(u^{n-1})
			&=-(\Phi_{+}(u^{n-1})-\Phi_{+}(u^n))-(\Phi_{-}(u^{n})-\Phi_{-}(u^{n-1}))\\
			&=(\Phi'_{+}(u^n)-\Phi'_{-}(u^{n-1}))(u^{n}-u^{n-1}) \notag\\
			&-\frac{1}{2}(\Phi_{+}^{''}(\xi^{n-1})+\Phi_{-}^{''}
			(\eta^{n-1}))(u^{n}-u^{n-1})^2.
			\label{dissp:eq2}
		\end{align}
		Combing the identity
		\begin{align*}
			a^2-b^2=2a(a-b)-(a-b)^2
		\end{align*}
		with \eqref{dissp:eq2}, \eqref{dissp:eq1} is equivalent to
		\begin{align}
			\nonumber&E(u^{n})-E(u^{n-1})\\
			\nonumber&=\epsilon^2(\nabla u^n,\nabla (u^n-u^{n-1}))-\frac{1}{2}\epsilon^2\Vert \nabla (u^{n}-u^{n-1})\Vert^2\\
			\nonumber&~~+(\Phi'_{+}(u^n)-\Phi'_{-}(u^{n-1}),u^n-u^{n-1})-\frac{1}{2}(\Phi_{+}^{''}(\xi^{n-1})+\Phi_{-}^{''}(\eta^{n-1}),(u^{n}-u^{n-1})^2)\\
			\nonumber&~~+\sigma((-\Delta)^{-\frac{1}{2}}(u^n-m),(-\Delta)^{-\frac{1}{2}}(u^n-u^{n-1}))-\frac{1}{2}\sigma\Vert (-\Delta)^{-\frac{1}{2}}(u^n-u^{n-1})\Vert^2\\
			\nonumber&=(-\epsilon^2\Delta u^n,u^n-u^{n-1})-\frac{1}{2}\epsilon^2\Vert \nabla (u^{n}-u^{n-1})\Vert^2\\
			\nonumber&~~+(\Phi'_{+}(u^n)-\Phi'_{-}(u^{n-1}),u^n-u^{n-1})-\frac{1}{2}(\Phi_{+}^{''}(\xi^{n-1})+\Phi_{-}^{''}(\eta^{n-1}),(u^{n}-u^{n-1})^2)\\
			\nonumber&~~+\sigma((-\Delta)^{-1}(u^n-m),u^n-u^{n-1})-\frac{1}{2}\sigma\Vert (-\Delta)^{-\frac{1}{2}}(u^n-u^{n-1})\Vert^2\\
			\nonumber&=(-\epsilon^2\Delta u^n+\Phi'_{+}(u^n)-\Phi'_{-}(u^{n-1})+\sigma(-\Delta)^{-1}(u^n-m),u^n-u^{n-1})\\
			\nonumber&~~-\frac{1}{2}\epsilon^2\Vert \nabla (u^{n}-u^{n-1})
			\Vert^2-\frac{1}{2}(\Phi_{+}^{''}(\xi^{n-1})+\Phi_{-}^{''}(\eta^{n-1}),(u^{n}-u^{n-1})^2)\\
			&~~-\frac{1}{2}\sigma\Vert (-\Delta)^{-\frac{1}{2}}(u^n-u^{n-1})\Vert^2.
			\label{dissp:eq3}
		\end{align}
		Taking the test function $v=u^n-u^{n-1}$ in \eqref{eq:semidiscretedCH}, then
		\begin{subequations}
			\begin{align}
				\label{dissp:eq4.1}
				&(\frac{u^n-u^{n-1}}{\Delta t},u^n-u^{n-1})+(\nabla w^n, \nabla (u^{n}-u^{n-1}))+\sigma(u^n-m ,u^n-u^{n-1})=0,\\
				\label{dissp:eq4.2}
				&(w^n,u^n-u^{n-1})-\epsilon^2(\nabla u^n, \nabla
				(u^n-u^{n-1}))-(\Phi'_{+}(u^n)-\Phi'_{-}(u^{n-1}),u^n-u^{n-1})=0.
			\end{align}
		\end{subequations}
		Putting
		\begin{align*}
			\mu^n=w^n+\sigma (-\Delta)^{-1}(u^n-m)
		\end{align*}
		into \eqref{dissp:eq4.1}, rearranging \eqref{dissp:eq4.2}, thus
		\begin{subequations}
			\begin{align}
				\label{dissp:eq5.1}
				&(\frac{u^n-u^{n-1}}{\Delta t},u^n-u^{n-1})=(\Delta \mu^n,u^n-u^{n-1}),\\
				\label{dissp:eq5.2}
				&(w^n,u^n-u^{n-1})=-\epsilon^2(\Delta
				u^n,u^n-u^{n-1})+(\Phi'_{+}(u^n)-\Phi'_{-}(u^{n-1}),u^n-u^{n-1}).
			\end{align}
		\end{subequations}
		Note that
		$$\Phi_{+}(u)\geq 0, ~~\Phi_{-}(u)\geq 0.$$
		Substituting\eqref{dissp:eq5.1} and \eqref{dissp:eq5.2} into \eqref{dissp:eq3} yields
		\begin{align*}
			&E(u^{n})-E(u^{n-1})\\
			&=(\mu^n,u^n-u^{n-1})-\frac{1}{2}\epsilon^2\Vert \nabla (u^{n}-u^{n-1})\Vert^2\\
			&~~-\frac{1}{2}(\Phi_{+}^{''}(\xi^{n-1})
			+\Phi_{-}^{''}(\eta^{n-1}),(u^{n}-u^{n-1})^2)
			-\frac{1}{2}\sigma\Vert (-\Delta)^{-\frac{1}{2}}(u^n-u^{n-1})\Vert^2.
		\end{align*}
		By the definition of $\mu^n$ and let the test function $v=\mu^n$ in \eqref{eq:semidiscretedCH}, we can obtain
		\begin{align*}
			(\mu^n,u^n-u^{n-1})=\Delta t (\mu^n,\Delta \mu^n),
		\end{align*}
		then
		\begin{align*}
			&E(u^{n})-E(u^{n-1})\\
			&=\Delta t (\mu^n,\Delta \mu^n)-\frac{1}{2}\epsilon^2\Vert \nabla (u^{n}-u^{n-1})\Vert^2\\
			&~~-\frac{1}{2}(\Phi_{+}^{''}(\xi^{n-1})
			+\Phi_{-}^{''}(\eta^{n-1}),(u^{n}-u^{n-1})^2)
			-\frac{1}{2}\sigma\Vert (-\Delta)^{-\frac{1}{2}}(u^n-u^{n-1})\Vert^2
			\leq 0,
		\end{align*}
		which implies \eqref{eq:energydissp}. Thus we complete the proof of Theorem \ref{thm:dissp}.
	\end{proof}
	
	\section{Proof of Theorem \ref{thm:bound}}\label{secA2}
	
	\begin{lemma}\label{le1}
		\cite{E.2010} (Poincar\'{e} inequality) Let $\Omega$ be a bounded, connected, open subset of $\mathbb{R}^{d}$ with boundary $\partial\Omega$ of $\mathbb{C}^1$. Then there exists a positive constant $c$, depending only on $d$ and $\Omega$ such that
		\begin{align*}
			\Vert v-\int_{\Omega}v\,dx \Vert \leq c \Vert \nabla v\Vert.
		\end{align*}
	\end{lemma}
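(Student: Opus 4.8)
The plan is to establish the inequality by the classical compactness (contradiction) argument; since the statement is quoted from \cite{E.2010}, this merely reproduces the standard proof of the Poincar\'e--Wirtinger inequality. Throughout, write $\bar v=\frac{1}{|\Omega|}\int_\Omega v\,dx$ for the mean of $v$ over $\Omega$ (this is the quantity intended on the left-hand side, up to the harmless factor $|\Omega|$); the claim is then that there is a constant $c=c(d,\Omega)$ with $\|v-\bar v\|\le c\,\|\nabla v\|$ for every $v\in H^1(\Omega)$.

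First I would set up the contradiction. If no such constant existed, then for each $n\in\mathbb{N}$ there would be $v_n\in H^1(\Omega)$ with $\|v_n-\bar v_n\|>n\,\|\nabla v_n\|$; in particular $v_n-\bar v_n\neq 0$, so we may renormalise and set
\[
w_n=\frac{v_n-\bar v_n}{\|v_n-\bar v_n\|}.
\]
Then $\|w_n\|=1$, the mean satisfies $\bar w_n=0$, and $\|\nabla w_n\|<\tfrac{1}{n}$. Hence $\{w_n\}$ is bounded in $H^1(\Omega)$ and $\nabla w_n\to 0$ in $L^2(\Omega)$.

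Next I would invoke the Rellich--Kondrachov theorem: since $\Omega$ is bounded, connected and open with $\partial\Omega$ of class $C^1$, the embedding $H^1(\Omega)\hookrightarrow L^2(\Omega)$ is compact. So along a subsequence $w_{n_k}\to w$ strongly in $L^2(\Omega)$; combined with $\|\nabla w_{n_j}-\nabla w_{n_k}\|\le \tfrac{1}{n_j}+\tfrac{1}{n_k}\to 0$, this makes $(w_{n_k})$ Cauchy in $H^1(\Omega)$, so $w_{n_k}\to w$ in $H^1(\Omega)$ with $\nabla w=0$ a.e. Since $\Omega$ is connected, $w$ equals a constant a.e.; passing to the limit in $\bar w_{n_k}=0$ gives $\bar w=0$, hence $w\equiv 0$. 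This contradicts $\|w\|=\lim_k\|w_{n_k}\|=1$, and the asserted inequality follows.

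The only step with genuine content --- as opposed to bookkeeping --- is the compactness input, i.e.\ the extraction of the $L^2$-convergent subsequence, and this is exactly where the $C^1$ regularity of $\partial\Omega$ is used: one constructs a bounded Sobolev extension operator $E\colon H^1(\Omega)\to H^1(\mathbb{R}^d)$ with images supported in a fixed ball, and then applies the Rellich theorem on that ball via the Fr\'echet--Kolmogorov criterion (uniform $L^2$-continuity of translates). The renormalisation, the identification of the $H^1$-limit as a constant, and the final contradiction with $\|w\|=1$ are all routine. For special geometries (a box, or more generally a convex $\Omega$) one can alternatively give a direct, constant-tracking proof by integrating along line segments and invoking Jensen's inequality; but for a general $C^1$ domain the compactness route above is the most economical.
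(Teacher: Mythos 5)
Your proof is correct: it is the standard compactness (contradiction) proof of the Poincar\'e--Wirtinger inequality, and all the steps are sound --- the normalisation, the use of Rellich--Kondrachov (which is indeed where the boundedness and $C^1$ regularity of $\partial\Omega$ enter, via an extension operator), the identification of the limit as a constant from $\nabla w=0$ on a connected domain, and the contradiction with $\|w\|=1$. Note, however, that the paper does not prove this lemma at all: it is quoted verbatim from the cited lecture notes \cite{E.2010} and used as a black box in the proof of Theorem 2.2 (Appendix B), so there is no in-paper argument to compare against. You were also right to read the left-hand side as $v-\frac{1}{|\Omega|}\int_\Omega v\,dx$; as literally typeset (subtracting $\int_\Omega v\,dx$ rather than the mean) the inequality fails for constant $v$ whenever $|\Omega|\neq 1$, so the paper's statement contains a harmless misprint that your proof silently corrects.
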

	
	\begin{definition}\cite{P.Q.2012}
		For $u\in H^1(\Omega)$, we define $\Delta_h u \in V_h$ such that
		\begin{align*}
			(-\Delta_h u,v_h)=(\nabla u,\nabla v_h),~~\forall v_h\in V_h,
		\end{align*}
		where the subscript `h' means that $\Delta_h u$ depends on the discretization mesh.
	\end{definition}
	
	\begin{proof}
		From Poincar\'{e} inequality (Lemma \ref{le1}), we have
		\begin{align*}
			\Vert u^n_h-m\Vert
			=\Vert u^n_h-\bbint_{\Omega}u^n_h\,dx \Vert
			\leq c \Vert\nabla u^n_h\Vert.
		\end{align*}
		Therefore
		\begin{align}
			\Vert u_h^n\Vert-m\vert \Omega\vert \leq \Vert u_h^n-m\Vert \leq c \Vert \nabla u^n_h\Vert .
			\label{thm:eq0}
		\end{align}
		Set $v_h=w_h^n$ in \eqref{eq:fullCH-1}, then
		\begin{align}
			\Big(\frac{u^n_h-u^{n-1}_h}{\Delta t},w_h^n \Big)+(\nabla w^n_h, \nabla w_h^n)+\sigma(u^n_h - m ,w_h^n)=0.\label{thm:eq1}
		\end{align}
		Taking $v_h=(u^n_h-u^{n-1}_h)/\Delta t$ in \eqref{eq:fullCH-2} yields
		\begin{align}
			(w^n_h,\frac{u^n_h-u^{n-1}_h}{\Delta t})
			=\epsilon^2(\nabla u^n_h, \nabla\frac{u^n_h-u^{n-1}_h}{\Delta t})+(\Phi'_+(u^n_h)-\Phi'_-(u^{n-1}_h), \frac{u^n_h-u^{n-1}_h}{\Delta t}).\label{thm:eq2}
		\end{align}
		Subtracting \eqref{thm:eq1} from \eqref{thm:eq2}, using the symmetry
		in the inner product, thus
		\begin{align}
			\epsilon^2\Big(\nabla u^n_h, \frac{\nabla u^n_h-\nabla u^{n-1}_h}
			{\Delta t}\Big)+\Big(\Phi'_+(u^n_h)&-\Phi'_-(u^{n-1}_h), \frac{u^n_h-u^{n-1}_h}{\Delta t}\Big)\notag\\
			&+\Vert\nabla w^n_h\Vert^2+\sigma(u^n_h - m ,w_h^n)=0.\label{thm:eq3}
		\end{align}
		Note that
		$$\nabla u^n_h=\frac{\nabla u^n_h-\nabla u^{n-1}_h}{2}+\frac{\nabla u^n_h+\nabla u^{n-1}_h}{2},$$
		and \eqref{thm:eq3} is equivalent to
		\begin{align}
			\frac{\epsilon^2}{2\Delta t}&\Big{[}(\nabla (u^n_h-u^{n-1}_h), \nabla (u^n_h-u^{n-1}_h))+(\nabla u^n_h+\nabla u^{n-1}_h, \nabla u^n_h- \nabla u^{n-1}_h)\Big{]}\notag\\
			&+\Big{(}\Phi'_+(u^n_h)-\Phi'_-(u^{n-1}_h), \frac{u^n_h-u^{n-1}_h}{\Delta t}\Big{)}+\Vert\nabla w^n_h\Vert^2+\sigma(u^n_h - m ,w_h^n)=0.\label{thm:eq4}
		\end{align}
		The sequence $u_h^n$ keeps mass conservation in \cite{P.Q.2012}, \textit{i.e.,}
		\begin{align}
			(u_h^n-m,1)=0 ~~\mbox{for}~~ n=0,1,\cdots,N.\label{thm:eq5}
		\end{align}
		Multiplying \eqref{thm:eq5} by $\bbint_{\Omega} w_h^{n}\,dx\in \mathbb{R}$ gets
		$$(u_h^n-m,\bbint_{\Omega} w_h^{n}\,dx)=0 .$$
		Hence,
		\begin{align}
			(u_h^n-m, w_h^{n})=(u_h^n-m, w_h^n-\bbint_{\Omega} w_h^{n}\,dx).\label{thm:eq6}
		\end{align}
		Combining \eqref{thm:eq4} with \eqref{thm:eq6}, we have
		\begin{align}
			&\frac{\epsilon^2}{2\Delta t}\Big{\{}\Vert\nabla (u^n_h-u^{n-1}_h)\Vert^2+\Vert \nabla u^n_h\Vert^2-\Vert\nabla u^{n-1}_h\Vert^2\Big{\}}+\frac{1}{\Delta t}(\Phi'_+(u^n_h)-\Phi'_-(u^{n-1}_h),u^n_h-u^{n-1}_h)\notag\\
			&~~~~~~~~ +\Vert\nabla w^n_h\Vert^2+\sigma(u^n_h-m ,w_h^n-\bbint_{\Omega} w_h^{n}\,dx)=0.\label{thm:eq7}
		\end{align}
		Using Taylor expansion for $\Phi(b)$ at $a$, then
		$$\Phi'(a)(a-b)=\Phi(a)-\Phi(b)+\frac{1}{2}\Phi''(\eta)(b-a)^2,~~\eta\in(a,b).$$
		Denote
		$$\Phi'_{+}(u)=u^3, ~~\Phi'_{-}(u)=u, ~~\Phi''_{+}(u)=3u^2> -1, ~~\Phi''_{-}(u)=1.$$
		Hence,
		\begin{subequations}
			\begin{align}
				\label{thm:eq8.1}
				&(\Phi_{+}(u_h^{n}),1)-(\Phi_{+}(u_h^{n-1}),1)-\frac{1}{2}\Vert u_h^n-u_h^{n-1}\Vert^2
				< (\Phi'_{+}(u_h^n),u_h^n-u_h^{n-1}),\\
				\label{thm:eq8.2}
				&(\Phi_{-}(u_h^{n-1}),1)-(\Phi_{-}(u_h^{n}),1)+\frac{1}{2}\Vert u_h^n-u_h^{n-1}\Vert^2
				=(\Phi'_{-}(u_h^{n-1}),u_h^{n-1}-u_h^{n}).
			\end{align}
		\end{subequations}
		Substituting \eqref{thm:eq8.1} and \eqref{thm:eq8.2} in \eqref{thm:eq7}, multiplying by $2\Delta t$, we have
		\begin{align}
			\nonumber&\epsilon^2\Vert \nabla u^n_h\Vert^2+\epsilon^2\Vert\nabla (u^n_h-u^{n-1}_h)\Vert^2+2(\Phi_+(u^n_h),1)+2(\Phi_-(u^{n-1}_h),1)+2\Delta t\Vert \nabla w^n_h\Vert^2\\
			\nonumber&<\epsilon^2\Vert\nabla u^{n-1}_h\Vert^2+2(\Phi_+(u^{n-1}_h),1)+2(\Phi_-(u^{n}_h),1)-2\Delta t\sigma(u^n_h-m ,w_h^n-\bbint_{\Omega} w_h^{n}\,dx)\\
			\nonumber&<\epsilon^2\Vert\nabla u^{n-1}_h\Vert^2+2(\Phi_+(u^{n-1}_h),1)+2(\Phi_-(u^{n}_h),1)+2\Delta t\sigma\Vert u^n_h-m\Vert\cdotp \Vert w_h^n-\bbint_{\Omega} w_h^{n}\,dx\Vert\\
			&<\epsilon^2\Vert\nabla u^{n-1}_h\Vert^2+2(\Phi_+(u^{n-1}_h),1)+2(\Phi_-(u^{n}_h),1)+2\Delta t\sigma c^2\Vert\nabla u^n_h\Vert \cdotp\Vert \nabla w^n_h\Vert.
			\label{thm:eq9}
		\end{align}
		Applying the inequality $2ab\leq a^2+b^2$
		to the last term on the right-hand side of \eqref{thm:eq9}, subtracting $\Delta t\Vert\nabla w^n_h\Vert^2$ from both sides of \eqref{thm:eq9}, then
		\begin{align}
			\nonumber&\epsilon^2\Vert\nabla u^n_h\Vert^2+\epsilon^2\Vert \nabla (u^n_h-u^{n-1}_h)\Vert^2+2(\Phi_+(u^n_h),1)+2(\Phi_-(u^{n-1}_h),1)+\Delta t\Vert\nabla w^n_h\Vert^2\\
			&<\epsilon^2\Vert\nabla u^{n-1}_h\Vert^2+2(\Phi_+(u^{n-1}_h),1)+2(\Phi_-(u^{n}_h),1)+\sigma^2 c^4 \Delta t\Vert\nabla u^n_h\Vert^2.
			\label{thm:eq10}
		\end{align}
		Summing \eqref{thm:eq10} over $n=1,2,\cdots,\tilde{k}$ ($\tilde{k}\leq N$) yields
		\begin{align}
			\nonumber&\epsilon^2\sum_{n=1}^{\tilde{k}}\Vert\nabla u^n_h\Vert^2+\epsilon^2\sum_{n=1}^{\tilde{k}}\Vert\nabla (u^n_h-u^{n-1}_h)\Vert^2+2\sum_{n=1}^{\tilde{k}}(\Phi_+(u^n_h),1) \\
			&+2\sum_{n=1}^{\tilde{k}}(\Phi_-(u^{n-1}_h),1)+\Delta t\sum_{n=1}^{\tilde{k}}\Vert\nabla w^n_h\Vert^2 \notag \\
			&<\epsilon^2\sum_{n=1}^{\tilde{k}}\Vert\nabla u^{n-1}_h\Vert^2+2\sum_{n=1}^{\tilde{k}}(\Phi_+(u^{n-1}_h),1)+2\sum_{n=1}^{\tilde{k}}(\Phi_-(u^{n}_h),1)+\sigma^2 c^4 \Delta t\sum_{n=1}^{\tilde{k}}\Vert\nabla u^n_h\Vert^2.
			\label{thm:eq11}
		\end{align}
		Canceling the same terms on each side of \eqref{thm:eq11}, then
		\begin{align}
			\nonumber&\epsilon^2\Vert\nabla u^{\tilde{k}}_h\Vert^2+\epsilon^2\sum_{n=1}^{\tilde{k}}\Vert\nabla (u^n_h-u^{n-1}_h)\Vert^2+2(\Phi_+(u^{\tilde{k}}_h)-\Phi_-(u^{\tilde{k}}_h),1)
			+\Delta t\sum_{n=1}^{\tilde{k}}\Vert\nabla w^n_h\Vert^2\\
			&<\epsilon^2\Vert\nabla u^0_h\Vert^2+2(\Phi_+(u^0_h)-\Phi_-(u^0_h),1)+\sigma^2 c^4 \Delta t\sum_{n=1}^{\tilde{k}}\Vert\nabla u^n_h\Vert^2.\label{thm:eq12}
		\end{align}
		For $\tilde{k}=1,2,\cdots,N$, note that
		$$\Phi(u^{\tilde{k}}_h)=\Phi_+(u_h^{\tilde{k}})-\Phi_-(u_h^{\tilde{k}})\geq 0.$$
		Hence, \eqref{thm:eq12} reduces to
		\begin{align*}
			\epsilon^2\Vert\nabla u^{\tilde{k}}_h\Vert^2
			<\epsilon^2\Vert\nabla u^0_h\Vert^2+2(\Phi_+(u^0_h)-\Phi_-(u^0_h),1)+\sigma^2 c^4 \Delta t\sum_{n=1}^{\tilde{k}}\Vert\nabla u^n_h\Vert^2.
		\end{align*}
		Therefore,
		\begin{align}
			(\epsilon^2-\sigma^2 c^4 \Delta t)\Vert\nabla u^{\tilde{k}}_h\Vert^2
			<\epsilon^2\Vert\nabla u^0_h\Vert^2+2(\Phi_+(u^0_h)-\Phi_-(u^0_h),1)+\sigma^2 c^4 \Delta t\sum_{n=1}^{\tilde{k}-1}\Vert\nabla u^n_h\Vert^2.
			\label{thm:eq13}
		\end{align}
		Suppose
		\begin{align*}
			\frac{\epsilon^2}{2}\leq \epsilon^2-\sigma^2 c^4 \Delta t,
		\end{align*}
		by \eqref{thm:eq13}, we can obtain
		\begin{align*}
			\frac{\epsilon^2}{2}\Vert\nabla u^{\tilde{k}}_h\Vert^2
			<\epsilon^2\Vert\nabla u^0_h\Vert^2+2(\Phi_+(u^0_h)-\Phi_-(u^0_h),1)+\sigma^2 c^4 \Delta t\sum_{n=1}^{\tilde{k}-1}\Vert\nabla u^n_h\Vert^2.
		\end{align*}
		It is equivalent to
		\begin{align}
			\nonumber\Vert\nabla u^{\tilde{k}}_h\Vert^2
			\nonumber&<2\Vert\nabla u^0_h\Vert^2+\frac{4}{\epsilon^2}(\Phi(u^0_h),1)+\frac{2\sigma^2 c^4}{\epsilon^2}\Delta t\sum_{n=1}^{\tilde{k}-1}\Vert\nabla u^n_h\Vert^2\\
			&=\mathcal{K}^*+\mathcal{L}^*\Delta t\sum_{n=1}^{\tilde{k}-1}\Vert\nabla u^n_h\Vert^2,\label{thm:eq14}
		\end{align}
		where
		\begin{align*}
			\mathcal{K}^*=2\Vert\nabla u^0_h\Vert^2+\frac{4}{\epsilon^2}(\Phi(u^0_h),1)\geq 0,~~\mathcal{L}^*=\frac{2\sigma^2 c^4}{\epsilon^2}\geq 0.
		\end{align*}
		Denote
		\begin{align*}
			\kappa_{\tilde{k}}=\mathcal{K}^*+\mathcal{L}^*\Delta t\sum_{n=1}^{\tilde{k}-1}\Vert\nabla u^n_h\Vert^2.
		\end{align*}
		Applying \eqref{thm:eq14}, it follows that
		\begin{align*}
			\hat{\kappa}_{\tilde{k}+1}-\hat{\kappa}_{\tilde{k}}&=\mathcal{L}^*\Delta t\Vert\nabla u_h^{\tilde{k}}\Vert^2\\
			&<\mathcal{L}^*\Delta t(\mathcal{K}^*+\mathcal{L}^*\Delta t\sum_{n=1}^{\tilde{k}-1}\Vert\nabla u^n_h\Vert^2)\\
			&=\mathcal{L}^*\Delta t\hat{\kappa}_{\tilde{k}}.
		\end{align*}
		Hence, we have
		\begin{align*}
			\hat{\kappa}_{\tilde{k}+1}<(1+\mathcal{L}^*\Delta t)\hat{\kappa}_{\tilde{k}}
			<\cdots<(1+\mathcal{L}^*\Delta t)^{\tilde{k}}\hat{\kappa}_1\leq e^{\mathcal{L}^*T}\mathcal{K}^*.
		\end{align*}
		For $\tilde{k}=1,2,\cdots,N$, combining with \eqref{thm:eq14}, we can obtain
		\begin{align*}
			\Vert\nabla u_{h}^{\tilde{k}+1}\Vert^2<\hat{\kappa}_{\tilde{k}+1}<e^{\mathcal{L}^*T}\mathcal{K}^*,
		\end{align*}
		and
		\begin{align*}
			\Vert\nabla u_{h}^{1}\Vert^2<\mathcal{K}^*\leq e^{\mathcal{L}^*T}\mathcal{K}^*.
		\end{align*}
		Therefore, for $n=1,2,\cdots,N$,
		\begin{align}
			\Vert\nabla u^n_h\Vert^2< C_2(\epsilon, \sigma, u_0, c, T).
			\label{thm:eq15}
		\end{align}
		Substituting \eqref{thm:eq15} into \eqref{thm:eq0} yields
		\begin{align}
			\Vert u_h^n\Vert\leq C(\epsilon, \sigma, u_0, m, c, T,\vert\Omega \vert),~~n=1,2,\cdots,N.\label{addeq:4}
		\end{align}
		Since $\Phi^{'}_+(u^n_h)=(u^n_h)^3$ for $n=1,~2,\cdots,N$
		and by Sobolev's inequality (see Theorem 3 in \cite{P.Q.2012}),
		it follows
		\begin{align*}
			\Vert \Phi^{'}_{+}\Vert=\Vert(u^n_h)^3\Vert=\Vert u^n_h\Vert_{L^6(\Omega)}^3
			\leq c^3_S\Vert u^n_h\Vert^3_{H^1(\Omega)}.
		\end{align*}
		Now, we put $v_h=\Delta_h u^n_h$ in \eqref{eq:fullCH-2} and have
		\begin{align*}
			(w^n_h,\Delta_h u^n_h)&=\epsilon^2(\nabla u^n_h, \nabla \Delta_h u^n_h)
			+(\Phi'_+(u^n_h)-\Phi'_-(u^{n-1}_h), \Delta_h u^n_h)\\
			&=-\epsilon^2(\Delta_h u^n_h, \Delta_h u^n_h)+(\Phi'_+(u^n_h)-\Phi'_-(u^{n-1}_h), \Delta_h u^n_h),
		\end{align*}
		thus
		\begin{align*}
			\epsilon^2\Vert \Delta_h u^n_h\Vert^2
			&=-(w^n_h,\Delta_h u^n_h)+(\Phi'_+(u^n_h)-\Phi'_-(u^{n-1}_h), \Delta_h u^n_h)\\
			&\leq \Vert w^n_h\Vert\,\Vert\Delta_h u^n_h\Vert+\Vert\Phi'_+(u^n_h)\Vert\,\Vert\Delta_h u^n_h\Vert
			+\Vert\Phi'_-(u^{n-1}_h)\Vert\,\Vert\Delta_h u^n_h\Vert,
		\end{align*}
		\textit{i.e.,}
		\begin{align}
			\epsilon^2\Vert \Delta_h u^n_h\Vert
			&\leq \Vert w^n_h\Vert+\Vert\Phi'_+(u^n_h)\Vert +\Vert\Phi'_-(u^{n-1}_h)\Vert \notag\\
			&\leq \Vert w^n_h\Vert +c^3_S\Vert u^n_h\Vert^3_{H^1(\Omega)}+\Vert u^{n-1}_h\Vert.\label{addeq:2}
		\end{align}
		Moreover, we take $v_h=w^n_h$ in \eqref{eq:fullCH-2} and obtain
		\begin{align*}
			\Vert w^n_h\Vert^2
			&=\epsilon^2(\nabla u^n_h, \nabla w^n_h)+(\Phi'_+(u^n_h)-\Phi'_-(u^{n-1}_h), w^n_h)\\
			&\leq\epsilon^2\Vert \nabla u^n_h\Vert \,\Vert \nabla w^n_h\Vert
			+(\Vert \Phi'_+(u^n_h)\Vert+\Vert\Phi'_-(u^{n-1}_h)\Vert)\,\Vert w^n_h\Vert,
		\end{align*}
		then
		\begin{align}
			\Vert w^n_h\Vert
			&\leq\epsilon^2\Vert \nabla u^n_h\Vert+\Vert\Phi'_+(u^n_h)\Vert+\Vert\Phi'_-(u^{n-1}_h)\Vert.\label{addeq:3}
		\end{align}
		Hence, combining \eqref{addeq:3} with \eqref{addeq:2}, we have
		\begin{align*}
			\epsilon^2\Vert \Delta_h u^n_h\Vert
			&\leq \epsilon^2\Vert\nabla u^n_h\Vert+2c^3_S\Vert u^n_h\Vert^3_{H^1(\Omega)}+2\Vert u^{n-1}_h\Vert\\
			&\leq \epsilon^2\Vert \nabla u^n_h\Vert+2c^3_S(\Vert\nabla u^n_h\Vert^2+\Vert u^n_h\Vert^2)^{3/2}+2\Vert u^{n-1}_h\Vert.
		\end{align*}
		From \eqref{thm:eq15} and \eqref{addeq:4}, we obtain
		\begin{align}
			\Vert \Delta_h u^n_h\Vert \leq C(\epsilon, \sigma, u_0, m, c, c_s, T,\vert \Omega\vert),~~n=1,2,\cdots,N.
				\label{eq:proofinfty}
		\end{align}
		Assume that the finite element triangulation is quasi-uniform, it follows that
		\begin{align*}
			\Vert u^n_h-\bbint_{\Omega}u^n_h d\Omega\Vert_{L^{\infty}(\Omega)}
			=\Vert u^n_h-m\Vert_{L^{\infty}(\Omega)}
			\leq C\Vert u^n_h\Vert^{1-\theta}\Vert\Delta_h u^n_h\Vert^{\theta}.
		\end{align*}
From \eqref{addeq:4} and \eqref{eq:proofinfty}, we have
\begin{align*}
\Vert u^n_h-m\Vert_{L^{\infty}(\Omega)}\leq
C(\epsilon, \sigma, u_0, m, c,c_s, T,\vert \Omega\vert).
\end{align*}
Therefore, we can deduce that
		\begin{align*}
			\Vert u_h^n\Vert_{L^{\infty}(\Omega)}\leq C(\epsilon, \sigma, u_0, m, c,c_s, T,\vert \Omega\vert),~~n=1,2,\cdots,N,
		\end{align*}
Hence, the proof of Theorem \ref{thm:bound} is completed.
	\end{proof}
	
	\section{Proof of Theorem \ref{th:Newton Error} }\label{secA3}
	
	The following lemmas are helpful to Theorem \ref{th:Newton Error}.
	\begin{lemma}\label{lemma:1}
		\cite{X.1996} For any $ u_1,~w_1,~u_2,~w_2,~v\in X$, denote
		\begin{align}		
			\Psi(u_1,w_1;v)=\Psi(u_2,w_2;v)+\Psi'(u_2,w_2;u_1-u_2,w_1-w_2,v)
			+R(u_2,w_2,u_1,w_1,v).\label{le:1}
		\end{align}
		For any given $k_0>0$, if
		\begin{align}
			\Vert u_1\Vert_{1,\infty}\leq k_0,
			~~\Vert u_2\Vert_{1,\infty}\leq k_0,
			\label{le:0}
		\end{align}
		then the remainder $R$ satisfies
		\begin{align*}
			\vert R(u_2,w_2,u_1,w_1,v)\vert=\vert R_1(u_1,u_2,v)\vert \leq C(k_0)\Vert e\Vert_{0,2p}^2\cdot\Vert v\Vert_{0,q},
		\end{align*}
		where $C(k_0) \geq \max\limits_{x\in\Omega,\vert y\vert \leq k_0}\{\vert g_{1,yy}\vert \}$ and
		$e=u_1-u_2$, $\frac{1}{p}+\frac{1}{q}=1, p,~q\geq 1$.
	\end{lemma}
	
	\begin{lemma}
		\label{lemma:2}
		\cite{H.2013}	
		If $h$ is sufficiently small and $u\in X$ is an isolated solution of (\ref{eq:CH}), then
		\begin{align*}
			\vert \Psi^{'}_1(u;v,\phi)\vert &\leq c \Vert v\Vert_{1,p}\Vert \phi\Vert_{1,q},
			~~\forall~v\in W^{1,p}(\Omega),~\phi\in W^{1,q}(\Omega).
		\end{align*}
	\end{lemma}
	
	\begin{lemma}
		\label{lemma:4}
		\cite{R.1982}\cite{S.1974}	
		For a given $z\in\Omega$, we can find $\hat{g}_{h}^z\in V_h$ such that
		\begin{align*}
			\Psi^{'}_1(u; v_h, \hat{g}_{h}^z)=\partial v_h(z),~~v_h\in V_h,
		\end{align*}
		where
		\begin{align*}
			\sup_{z\in \bar{\Omega}}\Vert\tilde{g}_{h}^z(z)\Vert_{1,1}\leq c\, \vert \log h\vert.
		\end{align*}
	\end{lemma}
	
	\begin{proof}
		Using the definition (\ref{le:1}) in Lemma \ref{lemma:1}, we have
		\begin{align}
			\Psi_1(u^{n,k-1}_h,v_h)&+\Psi_2(w^{n,k-1}_h,v_h)
			+\Psi^{'}_1(u_h^{n,k-1};u_h^{n}-u_h^{n,k-1},v_h)\notag\\
			&~~~~+\Psi^{'}_2(w_h^{n,k-1};w_h^n-w_h^{n,k-1},v_h)
			+R_1(u_h^{n,k-1},u^{n}_h,v_h)=0.\label{Newton:3}
		\end{align}
		Combining (\ref{Newton:2}) with (\ref{Newton:3}), it is obvious that
		\begin{align}
			&\Psi_1(u^{n,k-1}_h,v_h)+\Psi_2(w^{n,k}_h,v_h)
			+\Psi^{'}_1(u_h^{n,k-1};u_h^{n,k}-u_h^{n,k-1},v_h)=0,\label{3.6.1}\\
			&\Psi_1(u^{n,k-1}_h,v_h)+\Psi_2(w^n_h,v_h)
			+\Psi^{'}_1(u_h^{n,k-1};u^{n}_h-u_h^{n,k-1},v_h)
			+R_1(u_h^{n,k-1},u^{n}_h,v_h)=0.\label{3.6.2}
		\end{align}
		Subtracting (\ref{3.6.2}) from (\ref{3.6.1}),
		we have the following error equation
		\begin{align}	
			\Psi_2(w^n_h-w_h^{n,k},v_h)
			+\Psi^{'}_1(u_h^{n,k-1};u^n_h-u_h^{n,k},v_h)
			+R_1(u_h^{n,k-1},u^n_h,v_h)=0.
			\label{Newton:04}
		\end{align}
		1) Firstly, using mathematical induction method for $\tilde{m}$, we prove
		the following estimate
		\begin{align}
			\Vert u^n_h-u_h^{n,\tilde{m}}\Vert_{1,\infty}
			\leq \rho^{2^{\tilde{m}}-1}\eta^{2^{\tilde{m}}}.\label{3.7}
		\end{align}
		
		i) When $\tilde{m}=0$, \eqref{3.7} is evidently true according to
		the initial condition \eqref{initial}.
		
		ii) Assume that \eqref{3.7} is true for $\tilde{m}=k-1$.
		
		iii) It is only required to prove \eqref{3.7} for $\tilde{m}=k$.
		From \cite{P.Q.2012} and \cite{H.2013}, we know that
		\begin{align*}
			\Vert u\Vert_{1,\infty}\leq C(\epsilon,\sigma,u_0,c_p,c_s,T,m,\vert \Omega\vert)
		\end{align*}
	and
\begin{align*}
\Vert u-u^n_h\Vert_{1,\infty}\leq ch= c_2.
	\end{align*}	
	are true. Thus, according to the induction assumption of $\tilde{m}=k-1$,
		we have
		\begin{align*}
			\Vert u-u_h^{n,k-1}\Vert_{1,\infty}&\leq
			\Vert u-u^n_h\Vert_{1,\infty}+\Vert u^n_h-u_h^{n,k-1}\Vert_{1,\infty}\\
			&\leq c_2+(\rho\eta)^{2^{n-1}-1}\eta\leq c_2+\eta.
		\end{align*}
		Then
		\begin{align}
			\Vert u_h^{n,k-1}\Vert_{1,\infty}\leq C+c_2+\eta\leq k_0.
			\label{3.7.1}
		\end{align}
		According to the definition of $\Psi^{'}_1(\cdot;\cdot,\cdot)$, we obtain
		\begin{align}	
			\vert \Psi^{'}_1(u;u^{n,k}_h-u^n_h,v_h)
			-\Psi^{'}_1(u^{n,k-1}_h;u^{n,k}_h-u^n_h,v_h)\vert 
			\leq \mathcal{F}(u,u^{n,k-1}_h)
			\Vert u^{n,k}_h-u^n_h\Vert_{1,\infty}\Vert v_h\Vert_{1,1},
			\label{le4:0}
		\end{align}
		where
		\begin{align}
			\mathcal{F}(u,u^{n,k-1}_h)
			=&\Vert F_{1,z}(x,u,\nabla u )-F_{1,z}(x,u^{n,k-1}_h,\nabla u^{n,k-1}_h)\Vert_{0,\infty}\notag\\
			&+\Vert F_{1,y}(x,u,\nabla u)-F_{1,y}
			(x,u^{n,k-1}_h,\nabla u^{n,k-1}_h )\Vert_{0,\infty}\notag\\
			&+\Vert g_{1,z}(x,u,\nabla u)
			-g_{1,z}(x,u^{n,k-1}_h,\nabla u^{n,k-1}_h )\Vert_{0,\infty}\notag\\
			&+\Vert g_{1,y}(x,u,\nabla u)
			-g_{1,y}(x,u^{n,k-1}_h,\nabla u^{n,k-1}_h )\Vert_{0,\infty}\notag\\
			=&\Vert g_{1,y}(x,u,\nabla u)
			-g_{1,y}(x,u^{n,k-1}_h,\nabla u^{n,k-1}_h )\Vert_{0,\infty}.
			\label{le4:2}
		\end{align}
		By setting
		\begin{align*}
			\Phi(t)=g_{1,y}(x,u+t(u^{n,k-1}_h-u),\nabla u_1
			+t\nabla (u^{n,k-1}_h-u)),
		\end{align*}
		for $u,~u^{n,k-1}_h\in X\cap W^{1,\infty}(\Omega)$ and using the definition of $g_{1,y}$, we have
		\begin{align}
			\vert g_{1,y}(x,u^{n,k-1}_h,\nabla u^{n,k-1}_h )-g_{1,y}(x,u,\nabla u )\vert 
			&=\Big{\vert} \int_0^1\Phi^{'}(t)\,dt \Big{\vert} \notag\\
			&\leq \max_{x\in\Omega,\vert y\vert \leq k_0,\vert z\vert \leq k_0}(\vert g_{1,yy}\vert)\Vert u-u^{n,k-1}_h\Vert_{1,\infty}.
			\label{le4:3}
		\end{align}
		Combining (\ref{le4:2}) with (\ref{le4:3}), it follows that
		\begin{align}
			\mathcal{F}(u,u^{n,k-1}_h)\leq C(k_0)\Vert u-u^{n,k-1}_h\Vert_{1,\infty}.
			\label{le4:4}
		\end{align}
		Substituting (\ref{le4:4}) into (\ref{le4:0}) yields
		\begin{align}	
			&\vert \Psi^{'}_1(u;u^{n,k}_h-u^n_h,v_h)
			-\Psi^{'}_1(u^{n,k-1}_h;u^{n,k}_h-u^n_h,v_h)\vert \notag\\
			&\leq C(k_0)\Vert u-u^{n,k-1}_h\Vert_{1,\infty}
			\Vert u^{n,k}_h-u^n_h\Vert_{1,\infty}\Vert v_h\Vert_{1,1}\notag\\
			&\leq (c_2+\eta)C(k_0)\,\Vert u^{n,k}_h-u^n_h\Vert_{1,\infty}\Vert v_h\Vert_{1,1}.
			\label{3.7.2}
		\end{align}
		Moreover, using Lemma \ref{lemma:2} and (\ref{3.7.1}), it is evident that
		\begin{align}
			\vert \Psi^{'}_1(u^{n,k-1}_h;u^{n,k}_h-u^n_h,v_h)\vert
			&\leq (c_2+\eta)\,\Vert u^{n,k}_h-u^n_h\Vert_{1,\infty}\, \Vert v_h\Vert_{1,1}+\vert \Psi^{'}_1(u;u^{n,k}_h-u^n_h,v_h)\vert \notag\\
			&\leq (c_2+\eta)\,\Vert u^{n,k}_h-u^n_h\Vert_{1,\infty}\,\Vert v_h\Vert_{1,1} + c_1 \Vert u^{n,k}_h-u^n_h\Vert_{1,\infty}\,\Vert v_h\Vert_{1,1}\notag\\
			&=[(c_2+\eta)+c_1]\,\Vert u^{n,k}_h-u^n_h\Vert_{1,\infty}\, \Vert v_h\Vert_{1,1}.\label{3.7.3}
		\end{align}
		Now for any $z\in\Omega$, let $\hat{g}^z_h$ be
		the discrete Green function.
		Note that
		\begin{align*}
			\partial\,(w^{n,k}_h-w^n_h)(z)
			= &\Psi^{'}_2(w;w^{n,k}_h-w^n_h,\hat{g}_h^z)\\
			=&\Psi^{'}_2(w;w^{n,k}_h-w^n_h,\hat{g}_h^z)
			+\Psi_2(w^n_h-w_h^{n,k},\hat{g}_h^z)\\	 &+\Psi^{'}_1(u_h^{n,k-1};u^n_h-u_h^{n,k},\hat{g}_h^z)
			+R_1(u_h^{n,k-1},u^n_h,\hat{g}_h^z)\\	 =&\Psi^{'}_1(u_h^{n,k-1};u^n_h-u_h^{n,k},\hat{g}_h^z)
			+R_1(u_h^{n,k-1},u^n_h,\hat{g}_h^z).
		\end{align*}
		Applying Lemma \ref{lemma:4} to \eqref{Newton:04}, we have
		\begin{align}
			\vert \partial\,(u^{n,k}_h-u^n_h)(z)\vert 
			=&\vert \Psi^{'}_1(u;u^{n,k}_h-u^n_h,\hat{g}_h^z)
			-\Psi_2(w^n_h-w_h^{n,k},\hat{g}_h^z)\notag\\	 &-\Psi^{'}_1(u_h^{n,k-1};u^n_h-u_h^{n,k},\hat{g}_h^z)
			-R_1(u_h^{n,k-1},u^n_h,\hat{g}_h^z)\vert \notag\\
			=&\vert \Psi^{'}_1(u;u^{n,k}_h-u^n_h,\hat{g}_h^z)
			-\partial\,(w^{n,k}_h-w^n_h)(z)\notag\\	 &-\Psi^{'}_1(u_h^{n,k-1};u^n_h-u_h^{n,k},\hat{g}_h^z)
			-R_1(u_h^{n,k-1},u^n_h,\hat{g}_h^z)\vert.
			\label{3.6.3}
		\end{align}
		Therefore, combining \eqref{3.7.2}, \eqref{3.7.3} with \eqref{3.6.3}, by Lemma \ref{lemma:1}, we obtain
		\begin{align}
			\vert \partial\,(u^{n,k}_h-u^n_h)(z)\vert 
			=&\vert \Psi^{'}_1(u;u^{n,k}_h-u^n_h,\hat{g}_h^z)
			-2\Psi^{'}_1(u_h^{n,k-1};u^n_h-u_h^{n,k},\hat{g}_h^z)
			-2R_1(u_h^{n,k-1},u^n_h,\hat{g}_h^z)\vert \notag\\
			\leq &\vert \Psi^{'}_1(u;u^{n,k}_h-u^n_h,\hat{g}_h^z)
			-\Psi^{'}_1(u_h^{n,k-1};u^n_h-u_h^{n,k},\hat{g}_h^z)\vert \notag\\	 &+\vert \Psi^{'}_1(u_h^{n,k-1};u^n_h-u_h^{n,k},\hat{g}_h^z)\vert 
			+\vert R_1(u_h^{n,k-1},u^n_h,\hat{g}_h^z)\vert \notag\\
			\leq& [(C(k_0)+1)(c_2+\eta)+c_1]\vert \log h\vert \,\Vert u^{n,k}_h-u^n_h\Vert_{1,\infty}\notag \\
			&+C(k_0)\vert \log h\vert \cdot \Vert u^n_h-u_h^{n,k-1}\Vert_{1,\infty}^2.\label{3.7.4}
		\end{align}
		Applying Lemma \ref{lemma:4} to \eqref{3.7.4} yields
		\begin{align*}
			\Vert u^{n,k}_h-u^n_h\Vert_{1,\infty}
			\leq & c_3[(C(k_0)+1)(c_2+\eta)+c_1]\vert \log h\vert \,\Vert u^{n,k}_h-u^n_h\Vert_{1,\infty}\\
			&+c_3C(k_0)\vert \log h\vert \,\Vert u^n_h-u_h^{n,k-1}\Vert^2_{1,\infty}.
		\end{align*}
		It can be seen that
		\begin{align}
			\Vert u^{n,k}_h-u^n_h\Vert_{1,\infty}
			&\leq\frac{c_3C(k_0)\vert \log h\vert }{1-c_3[(C(k_0)+1)(c_2+\eta)+c_1]\vert \log h\vert }\Vert u^n_h-u_h^{n,k-1}\Vert^2_{1,\infty}\notag \\
			&=\tilde\rho \Vert u^n_h-u_h^{n,k-1}\Vert^2_{1,\infty}.\label{3.7.5}
		\end{align}
		Using induction assumption, it is evident that
		\begin{align}
			\Vert u^n_h-u_h^{n,k-1}\Vert^2_{1,\infty}\leq \tilde\rho^{2\times (2^{k-1}-1)}\eta^{2\times2^{k-1}}.\label{3.10}
		\end{align}
		Combining \eqref{3.7.5} with \eqref{3.10} yields
		\begin{align*}
			\Vert u^n_h-u_h^{n,k}\Vert_{1,\infty}\leq \tilde\rho^{ 2^k-1}\eta^{2^k}=c\rho^{2^k},
		\end{align*}
	where $\rho =\tilde\rho\eta$. Therefore, (\ref{3.7}) is ture for any $k$.
		
		2) Secondly, note that
		\begin{align}
			\Vert u^n_h-u_h^{n,{\tilde{m}}}\Vert_{1,p}\leq \vert \Omega\vert^{\frac{1}{p}}\Vert u^n_h-u_h^{n,{\tilde{m}}}\Vert_{1,\infty}.
			\label{3.12}
		\end{align}
		In terms of the condition (\ref{3.13}), substituting (\ref{3.12}) into (\ref{3.7}) yields
		\begin{align*}
			\Vert u^n_h-u_h^{n,{\tilde{m}}}\Vert_{1,p}\leq c\rho^{2^{\tilde{m}}}.
		\end{align*}
		The proof is completed.
	\end{proof}

	
	
	
\end{appendices}


\begin{thebibliography}{99}
	
\bibitem{bates2012multiblock}
F. S. Bates, M. A.Hillmyer, T. P. Lodge, C. M. Bates,
K. T. Delaney and G. H. Fredrickson, Multiblock polymers: Panacea or Pandora's box? Science, 336 (2012), 434-440.

\bibitem{jiang2013discovery}
K. Jiang, C. Wang, Y. Q. Huang and P. W. Zhang, Discovery of new metastable
patterns in diblock copolymers, Commun. Comput. Phys., 14 (2013), 443-460.

\bibitem{leibler1980theory}
L. Leibler, Theory of microphase separation in block copolymer, Macromolecules, 13 (1980), 1602-1617.

\bibitem{G.2016}
R. Guo and Y. Xu, Local discontinuous Galerkin method and high order semi-implicit
scheme for the phase field crystal equation, SIAM J. Sci. Comput., 38 (2016), A105-
A127.

\bibitem{G.2018}
R. Guo and Y. Xu, A high order adaptive time-stepping strategy and local discontinuous
Galerkin method for the modified phase field crystal equation, Commun. Comput.
Phys., 24 (2018), 123-151.

\bibitem{fredrickson2006equilibrium}
G. H. Fredrickson, The equilibrium theory of inhomogeneous polymers, Oxford University Press: New York, 2006.

\bibitem{T.K.1986}
T. Ohta and K. Kawasaki, Equilibrium morphology of block copolymer melts, Macromolecules, 19 (1986), 2621-2632.


\bibitem{R.X.2002}
R. Choksi and X. Ren, On the derivation of a density functional theory for
microphase separation of diblock copolymers,
J. Stat. Phys, 113 (2002), 151-176.

\bibitem{D.1998}
D. Eyre, Unconditionally gradient stable time marching the Cahn-Hilliard equation, Computational and Mathematical Models of Microstructural Evolution, 529 (1998), 39-46.

\bibitem{xu2006stability}
C. Xu and T. Tang, Stability analysis of large time-stepping methods for epitaxial
growth models, SIAM J. Numer. Anal., 44 (2006), 1759-1779.

\bibitem{shen2018sav}
J. Shen, J. Xu and J. Yang, The scalar auxiliary variable (SAV) approach for
gradient flows, J. Comput. Phys., 353 (2018), 407-416.


\bibitem{du2004ETD}
Q. Du and W.X. Zhu, Stability analysis and application of the exponential time differencing schemes, J. Comput. Math., 22 (2004), 200-209.

\bibitem{H.1979}
W. Hackbusch, On the fast solution of nonlinear elliptic equations, Numer. Math., 32 (1979), 83-95.

\bibitem{W.2015}
W. T. Li and C. F. Wei, An efficient finite element procedure for analyzing
three-phase porous media based on the relaxed Picard method, Int. J. Numer. Methods Eng., 101 (2015), 825-846.

\bibitem{J.2017}
K. Junseok and S. Jaemin, An unconditionally gradient stable numerical method for
the Ohta-Kawasaki model, J. Korean Math. Soc., 54 (2017), 145-158.

\bibitem{D.2008}
X. X. Dai and X. L. Cheng, A two-grid method based on Newton iteration for the
Navier-Stokes equations, J. Comput. Appl. Math., 220 (2008), 566-573.

\bibitem{W.2010}
S. M. Wise, Unconditionally stable finite difference, nonlinear multigrid
simulation of the Cahn-Hilliard-Hele-Shaw system of equations, J. Sci. Comput., 44 (2010), 38-68.

\bibitem{W.2009}
Z. Hu, S. M. Wise, C. Wang and J. S. Lowengrub, Stable and efficient finite-difference nonlinear-multigrid schemes for the phase-field crystal equation, J. Comput. Phys., 228 (2009), 5323-5339.

\bibitem{X.Y.2019}
X. Xu and Y. X. Zhao, Energy stable semi-implicit schemes for
Allen-Cahn-Ohta-Kawasaki model in Binary System, J. Sci. Comput., 80 (2019), 1656-1680.

\bibitem{W.2016}
W. Feng, Z. Guan, J. Lowengrub, C. Wang, S. Wise and Y. Chen, A uniquely solvable,
energy stable numerical scheme for the functionalized Cahn-Hilliard equation
and its convergence analysis, J. Sci. Comput., 76 (2018), 1938-1967.

\bibitem{P.J.2017}
P. E. Farrell and J. W. Pearson, A preconditioner for the Ohta-Kawasaki equation,
SIAM J. Matrix Anal. Appl., 38 (2017), 217-225.

\bibitem{X.2019}
J. C. Xu, Y. K. Li and S. N. Wu, On the stability and accuracy of partially and
fully implicit schemes for phase field modeling, Comput. Method Appl. M., 345 (2019), 829-853.

\bibitem{J.M.P.2014}
J. Bosch, M. Stoll and P. Benner, Fast solution of Cahn-Hilliard variational
inequalities using implicit time discretization and finite elements, J. Comput. Phys., 262 (2014), 38-57.

\bibitem{L.2018}
R. X. Li, Z. Z. Liang, G. F. Zhang and L. D. Liao,
A note on preconditioner for the Ohta-Kawasaki equation, Appl. Math. Lett., 85 (2018), 132-138.


\bibitem{Pearson2012new}	
J. W. Pearson and A. J. Wathen, A new approximation of the Schur complement in preconditioners for PDE-constrained optimization. Numer. Linear Algebra Appl., 19 (2012), 816-829.
	
\bibitem{Pearson2018matching}
J. W. Pearson and A. Wathen, Matching Schur complement approximations for certain saddle-point systems. In: Dick, J., Kuo, F. Y., Wo\'{z}niakowski H. (eds.), Contemporary Computational Mathematics-A Celebration of the 80th Birthday of Ian Sloan, Springer, (2018), 1001-1016.

\bibitem{P.Q.2012}
P. Quentin, Numerical approximation of the Ohta-Kawasaki functional, Master's thesis, University of Oxford, Oxford, UK, 2012.

\bibitem{H.2013}
Y. N. He, Y. Zhang and H. Xu, Two-level Newton's method for nonlinear elliptic PDEs, J. Sci. Comput., 57 (2013), 124-145.

\bibitem{X.1996}
J. C. Xu, Two-grid discretization techniques for linear and nonlinear PDEs, SIAM J. Numer. Anal., 33 (1996), 1759-1777.

\bibitem{R.1982}
R. Rannacher and R. Scott, Some optimal error estimates for piecewise linear
finite element approximation, Math. Comp., 54 (1982), 437-445.

\bibitem{S.1974}
A. Schatz, An observation concerning Ritz-Galerkin methods with indefinite
bilinear forms, Math. Comp., 28 (1974), 959-962.

\bibitem{T.A.D.2004}
T. A. Davis, Algorithm 832: UMFPACK V4.3-an unsymmetric-pattern multifrontal method, ACM Trans. Math. Software, 30 (2004), 196-199.

\bibitem{P.I.J.2001}
P. R. Amestoy, I. S. Duff, J. Y. L. Excellent and J. Koster, A fully asynchronous
multifrontal solver using distributed dynamic scheduling, SIAM J. Matrix Anal. Appl., 23 (2001), 15-41.

\bibitem{X.J.2003}
X. S. Li and J. W. Demmel, SuperLU DIST: A scalable distributed-memory sparse
direct solver for unsymmetric linear systems, ACM Trans. Math. Software, 29 (2003), 110-140.


\bibitem{Greenbaum1996nonincreasing}
A. Greenbaum, V. Pt\'{a}k and Z. E. K. Strako\v{s}, Any nonincreasing convergence curve is possible for GMRES. SIAM J. Matrix Anal. Appl., 17 (1996), 465-469.
	
\bibitem{Arioli1998Krylov}
M. Arioli, V. Pt\'{a}k and Z. E. K. Strako\v{s}, Krylov sequences of maximal length and convergence of GMRES. BIT Numer. Math., 38 (1998), 636-643.


\bibitem{O.2015}
O. Axelsson, Unified analysis of preconditioning methods for saddle point matrices, Numer. Linear Algebra Appl., 22 (2015), 233-253.

\bibitem{Y.C.J.2010}
M. Q. Jiang, Y. Cao and L. Q. Yao, On parameterized block triangular
preconditioners for generalized saddle point problems, Appl. Math. Comput., 216 (2010), 1777-1789.

\bibitem{G.Z.2016}
J. Zhang and C. Gu, A variant of the deteriorated PSS preconditioner for
nonsymmetric saddle point problems, BIT Numer. Math., 56 (2016), 587-604.

\bibitem{G.B.2003}
Z. Z. Bai, G. H. Golub and M. K. Ng, Hermitian and skew-Hermitian splitting methods
for non-Hermitian positive definite linear systems, SIAM J. Matrix Anal. Appl., 24 (2003), 603-626.


\bibitem{Cao2016simplified}
Y. Cao, Z. R. Ren and Q. Shi , A simplified HSS preconditioner for generalized saddle point problems, BIT Numer. Math., 56 (2016), 423-439.


\bibitem{L.L.G.2019}
L. D. Liao and G. F. Zhang, A generalized variant of simplified HSS preconditioner
for generalized saddle point problems, Appl. Math. Comput., 346 (2019), 790-799.

\bibitem{H.2014}
Y. M. Huang, A practical formula for computing optimal parameters in the HSS
iteration methods, J. Comput. Appl. Math.,  255 (2014), 142-149.

\bibitem{J.L.Z.2019}
K. Jiang, S. F. Li and J. Zhang, A modified Hermitian and skew-Hermitian preconditioner for the Ohta-Kawasaki equation, arXiv:1910.09297.

\bibitem{E.2010}
E. S{\"u}li, Numerical solution of ordinary differential equations, Mathematical Institute, University of Oxford, 2010.
\end{thebibliography}
\end{document}